\theoremstyle{plain}\newtheorem{Theorem}{Theorem}[section]
\theoremstyle{plain}
\theoremstyle{plain}\newtheorem{Lemma}[Theorem]{Lemma}
\theoremstyle{plain}
\theoremstyle{plain}\newtheorem{Proposition}[Theorem]{Proposition}
\theoremstyle{plain}
\theoremstyle{plain}\newtheorem{Quesion}[Theorem]{Question}
\theoremstyle{plain}\newtheorem*{Theorem*}{Theorem}
\theoremstyle{remark}\newtheorem{remark}[Theorem]{Remark}
\theoremstyle{plain}\newtheorem{thm}[Theorem]{Theorem}
\theoremstyle{plain}\newtheorem{cor}[Theorem]{Corollary}
\theoremstyle{plain}\newtheorem{prop}[Theorem]{Proposition}
\theoremstyle{plain}\newtheorem{lem}[Theorem]{Lemma}
\theoremstyle{plain}
\theoremstyle{plain}
\theoremstyle{plain}
\theoremstyle{plain}
\theoremstyle{plain}
\newtheorem*{rep@theorem}{\rep@title}
\newcommand{\newreptheorem}[2]{%
\newenvironment{rep#1}[1]{%
 \def\rep@title{#2 \ref{##1}}%
 \begin{rep@theorem}}%
 {\end{rep@theorem}}}
\theoremstyle{definition}
\newtheorem{defn}[Theorem]{\textbf{Definition}}
\newtheorem{exmp}[Theorem]{Example}
\theoremstyle{remark}
\newtheorem{rem}[Theorem]{Remark}
\numberwithin{equation}{section}
\DeclareMathOperator{\rank}{rank}
\DeclareMathOperator{\Kh}{Kh}
\DeclareMathOperator{\Khr}{Khr}
\DeclareMathOperator{\pt}{pt}
\DeclareMathOperator{\lk}{lk}
\DeclareMathOperator{\HFK}{\widehat{HFK}}
\DeclareMathOperator{\HFL}{\widehat{HFL}}
\DeclareMathOperator{\SFH}{SFH}
\newcommand{\bQ}{\mathbb{Q}}
\newcommand{\bZ}{\mathbb{Z}}
\newcommand{\bF}{\mathbb{F}}
\newcommand{\al}{\alpha}
\newcommand{\be}{\beta}
\newcommand{\ga}{\gamma}
\newcommand{\bpf}{\begin{proof}}
\newcommand{\epf}{\end{proof}}
\newcommand{\bthm}{\begin{thm}}
\newcommand{\ethm}{\end{thm}}
\newcommand{\bprop}{\begin{prop}}
\newcommand{\eprop}{\end{prop}}
\newcommand{\bcor}{\begin{cor}}
\newcommand{\ecor}{\end{cor}}
\newcommand{\blem}{\begin{lem}}
\newcommand{\elem}{\end{lem}}
\newcommand{\bdefn}{\begin{defn}}
\newcommand{\edefn}{\end{defn}}
\newcommand{\bexmp}{\begin{exmp}}
\newcommand{\eexmp}{\end{exmp}}
\newcommand{\brem}{\begin{rem}}
\newcommand{\erem}{\end{rem}}
\newcommand{\bdia}{\begin{displaymath}\xymatrix}
\newcommand{\edia}{\end{displaymath}}
\newcommand{\beq}{\begin{equation*}\begin{aligned}}
\newcommand{\eeq}{\end{aligned}\end{equation*}}
\newcommand{\intg}{\mathbb{Z}}
\author{Zhenkun Li}
\address{Department of Mathematics, Massachusetts Institute of Technology, Massachusetts 02139, USA}
\email{zhenkun@mit.edu}
\author{Yi Xie}
\address{Beijing International Center for Mathematical Research, Peking University, Beijing 100871, China}
\email{yixie@pku.edu.cn}
\author{Boyu Zhang}
\address{Department of Mathematics, Princeton University, New Jersey 08544, USA}
\email{bz@math.princeton.edu}
\title{Two detection results of Khovanov homology on links}
\begin{document}

\begin{abstract}
We prove that Khovanov homology with $\bZ/2$--coefficients detects the link L7n1, and
the union of a trefoil and its meridian.
\end{abstract}

\maketitle

\section{Introduction}

Given an oriented link $L$ in $S^3$ and a commutative ring $R$, Khovanov homology \cite{Kh-Jones}
assigns a bi-graded $R$--module $\Kh(L; R)$ to the link $L$. 
In 2011, Kronheimer and Mrowka \cite{KM:Kh-unknot} proved that Khovanov homology detects the unknot. Since then, many other detection results of Khovanov homology have been obtained. It is now known that Khovanov homology detects the unlink \cite{Kh-unlink, HN-unlink}, the trefoil \cite{BS}, the Hopf link \cite{BSX}, the forest of unknots \cite{XZ:forest}, the splitting of links \cite{LS-split}, and the torus link  $T(2,6)$ \cite{Martin:T26}.

In \cite{XZ:rank8}, a classification is given for all links $L$ such that  $\rank_{\bZ/2}\Kh(L;\bZ/2)\le 8$ and all 3-component links $L$ such that  $\rank_{\bZ/2}\Kh(L;\bZ/2)\le 12$. 
 By \cite[Corollary 3.2.C]{Shu:torsion_Kh},
$\rank_{\bZ/2}\Kh(L;\bZ/2)=2\rank_{\bZ/2}\Khr(L;\bZ/2)$, where $\Khr$ denotes the reduced Khovanov homology. Moreover, the parity of $\rank_{\bZ/2}\Khr(L;\bZ/2)$ is invariant under crossing changes and hence is always even for 2-component links (as is the case for the 2-component unlink). Therefore $\rank_{\bZ/2}\Kh(L;\bZ/2)$ is always a multiple of 4. As a consequence, if a 2-component link $L$ satisfies  $\rank_{\bZ/2}\Kh(L;\bZ/2)> 8$, then $\rank_{\bZ/2}\Kh(L;\bZ/2)\ge 12$. 

This paper studies 2-component links $L$ such that $\rank_{\bZ/2}(L;\bZ/2)=12$. Among 2-component links with crossing numbers less than or equal to 7, there are four links (up to mirror images) satisfying $\rank_{\bZ/2}(L;\bZ/2)=12$.  These links are: 
\begin{enumerate}
	\item L7n1 in the Thistlethwaite Link Table, 
	\item  L6a3 in the Thistlethwaite Link Table, 
	\item the disjoint union of a trefoil and an unknot,  
	\item the union of a trefoil and its meridian.
\end{enumerate}

\begin{Quesion}
    Suppose $L$ is a 2-component link with $\rank_{\bZ/2}(L;\bZ/2)=12$, is it true that $L$ must be isotopic (up to mirror image) to one of the links listed above?
\end{Quesion}

Instead of giving a full answer to the question above, we show that Khovanov homology (with the bi-grading)  detects the link L7n1, and the union of a trefoil with its meridian, from the list above. Since \cite{XZ:forest} proved that Khovanov homology detects the disjoint union of a trefoil and an unknot, and Martin \cite{Martin:T26} recently proved that Khovanov homology detects L6a3, we conclude that Khovanov homology detects all the links on the list.

In the following, we will call the link L7n1 as $L_1$, and the union of a trefoil with a meridian $L_2$. Moreover, we fix the chirality and orientation of these two links by 
Figure \ref{fig_L1_and_L2}. Notice that the link $L_1$ can also be described as 
 the closure of the 2-braid $\sigma_1^3$
together with an axis unknot.

\begin{figure}
\centering    
\begin{overpic}[width=\textwidth]{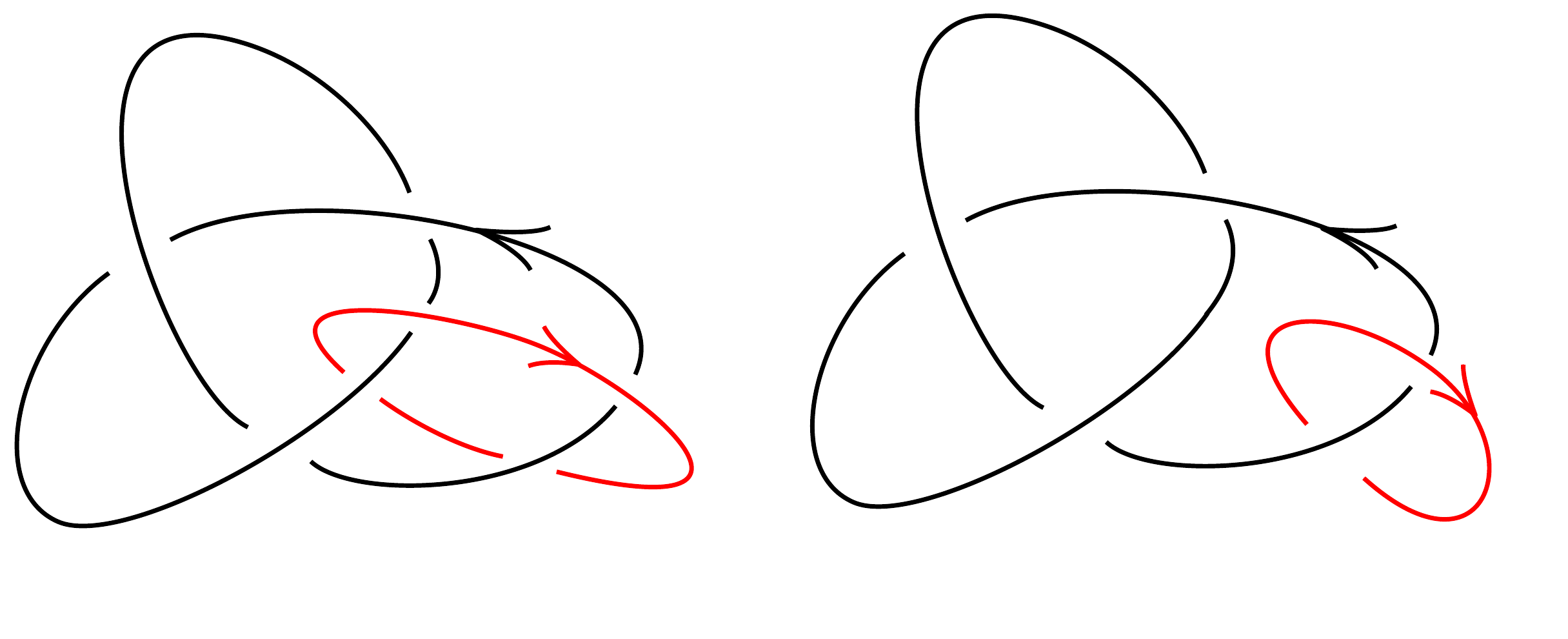}
    \put(13,0){$L_1={\rm L7n1}$}
    \put(58,0){$L_2={\rm trefoil~}\cup{~\rm meridian}$}
    \end{overpic}
\caption{The two links $L_1$ and $L_2$}\label{fig_L1_and_L2}
\end{figure}

 Recall that the internal grading of Khovanov homology is defined by $h-q$ 
 in \cite{Kh-unlink}, where 
$h$ is the homological grading and $q$ is the  quantum grading. The precise statement of our detection result is given as follows. 

\begin{Theorem}
\label{thm_main_detection}
Let $L_1,L_2 \subset S^3$ be the oriented links as shown in Figure \ref{fig_L1_and_L2}, and let $i\in\{1,2\}$. Suppose $L\subset S^3$ is a 2-component oriented link, such that 
$$\Kh(L;\bZ/2)\cong \Kh(L_i;\bZ/2)$$ as abelian groups equipped with the internal gradings, then $L$ is isotopic to $L_i$ as oriented links.
\end{Theorem}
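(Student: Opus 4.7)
The plan is to follow the standard Floer-theoretic template for Khovanov detection: extract combinatorial data from the bigraded Khovanov homology, transfer rank bounds to an instanton Floer invariant via a spectral sequence, and conclude via classification results. As a first step, from the isomorphism $\Kh(L;\bZ/2)\cong \Kh(L_i;\bZ/2)$ of internally graded groups I would invoke the Bar-Natan deformation of Khovanov homology over $\bZ/2$ to identify the linking number $\lk(L)$ with $\lk(L_i)$ (equal to $2$ for $i=1$ and $1$ for $i=2$), and to read off the unnormalized Jones polynomial of $L$; combined with the Seifert-genus bounds available in the Lee--Bar-Natan framework, this pins down the Euler characteristic and restricts the possible Seifert genera of the two components.

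Next, from $\rank_{\bZ/2}\Kh(L;\bZ/2)=12$ and the relation $\rank_{\bZ/2}\Kh = 2\rank_{\bZ/2}\Khr$ recalled in the introduction, we obtain $\rank_{\bZ/2}\Khr(L;\bZ/2)=6$. Kronheimer--Mrowka's spectral sequence then gives $\dim_{\bC}\II^\natural(L) \le 6$, and its sutured-instanton cousins give $\dim_{\bC}\KHI(L) \le 6$ together with analogous bounds for the sutured instanton Floer homology of the link exterior with various suture choices. Decomposing $\KHI(L)$ by its Alexander bi-grading and applying the genus, Thurston-norm and fiberedness detection theorems for sutured instanton homology, combined with the unknot detection by $\II^\natural$ (Kronheimer--Mrowka) and the trefoil detection by $\II^\natural$ (Baldwin--Sivek) applied to each component, should force $L = T \cup U$ where $T$ is a trefoil and $U$ an unknot of linking number $\lk(L_i)$.

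To finish, view $U$ as the core of a solid torus $V = S^3 \setminus \nu U$, so that $T$ becomes a pattern in $V$ of winding number $\lk(L_i)$. The full bigraded $\KHI$ of the link exterior, decomposed by the Alexander grading associated to $U$, controls the Alexander polynomial and genus of $T \subset V$, and combined with the winding number and the classification of low-complexity patterns in the solid torus (developed in \cite{XZ:rank8} and related work) should uniquely identify the pattern: for $L_2$ the winding number is $1$ and the pattern must be a trefoil passing once through a meridian disk of $V$, while for $L_1$ the winding number is $2$ and the pattern must coincide with the closure of the $2$-braid $\sigma_1^3$ about the axis. The main obstacle is precisely this last step: even after both components are identified individually and the linking number is fixed, infinitely many isotopy classes of patterns $T \subset V$ of small genus realize these numerical constraints, and ruling them all out will require the sharpest form of the bigraded Floer data together with pattern-knot structure theorems for knots in the solid torus.
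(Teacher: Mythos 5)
Your high-level outline matches the paper's strategy — bound the rank of a Floer homology by $\Kh$, identify each component as an unknot or a trefoil, pin down the linking number, and then try to identify the pattern — but the route you take diverges in two important ways and, as you yourself concede, leaves the decisive step unresolved.

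First, the paper does not pass to instanton Floer homology at all. The rank transfer is done via Dowlin's spectral sequence to Heegaard Floer homology, giving $\dim_\bQ\HFK(L;\bQ)\le 2\,\rank_{\bZ/2}\Khr(L;\bZ/2)=12$, and the linking number and component identification come not from a Bar--Natan deformation but from Batson--Seed's link-splitting spectral sequence, which is stated with the internal ($l$-)grading precisely so that the grading shift $2\lk(K_1,K_2)$ can be read off against the explicit $l$-graded groups $\Kh(L_i;\bZ/2)$. The choice of Heegaard Floer over instanton is not cosmetic: the ingredients you would need at the final stage — Martin's braid-axis detection (``$K$ is a braid closure with axis $U$ iff the top $U$-Alexander-graded piece of $\HFL$ has rank $2$'') and Ozsv\'ath--Szab\'o's Thurston-norm detection for $\HFL$ — are available precisely in the Heegaard Floer setting, and Dowlin's inequality feeds into them directly. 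Your proposed appeal to $\KHI$ and $\II^\natural$ would require restating and reproving these link-Floer facts on the instanton side, which the paper deliberately avoids.

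Second, and more seriously, the step you flag as ``the main obstacle'' is exactly the genuine gap. Knowing that $L=K\cup U$ with $K$ a trefoil or unknot, $U$ unknotted, and $\lk(K,U)$ fixed does \emph{not} by itself pin down the pattern, and ``structure theorems for low-complexity patterns in the solid torus'' of the generality you invoke do not exist. The paper closes this gap with Proposition~\ref{prop_rank_12_braid}, whose proof uses three ingredients you do not mention: (i) Martin's braid detection theorem, which converts the condition ``top $U$-Alexander rank of $\HFL$ equals $2$'' into the statement that $K$ is a braid closure about $U$; (ii) the new parity result Proposition~\ref{prop_HFL_top_grading_parity} (proved by sutured-manifold decomposition and a skein exact triangle), which shows that if $U$ bounds a disk meeting $K$ transversely in $l$ points then $\dim_\bQ\HFL(L,l/2)\equiv 2 \pmod 4$, thereby \emph{ruling out} the ambiguous rank-$4$ case at the top grading; and (iii) a Torres-formula analysis of the multivariable Alexander polynomial (Lemmas~\ref{lem_supp_2_f-grading} and \ref{lem_Alex_4_degrees}) that handles the rank-$4$ and rank-$6$ cases when the top grading is not $l/2$. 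Once Proposition~\ref{prop_rank_12_braid} forces $K$ to be an $l$-braid closure about $U$ (or $U$ a meridian), the classification is finished by the uniqueness of the $2$-braid $\sigma_1^3$ representing the left-handed trefoil, and — in the unknot--unknot case — by the exchangeably-braided-link bound Proposition~\ref{prop_linking_number_3} from \cite{XZ:rank8} combined with explicit computations for L4a1, L6a2, L6a3. Without these tools the argument cannot be completed, so the proposal as written does not constitute a proof.
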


The proof of Theorem \ref{thm_main_detection} depends on a rank inequality between reduced Khovanov homology and knot Floer homology by Dowlin \cite{Dowlin}, and a braid detection property of link Floer homology by Martin \cite{Martin:T26}. The main ingredient of the proof of Theorem \ref{thm_main_detection} is the following proposition, which is established in Section \ref{sec_rank_12_braid}.

\begin{Proposition}\label{prop_rank_12_braid}
Let $L=K\cup U$ be a link such that $U$ is an unknot and $K$ is either an unknot or a trefoil. Let $l=|\lk(K,U)|$ be the linking number of $K$ and $U$. Suppose $l>0$, and
\begin{equation}
\label{eqn_dim_assumption_HFK(L;Q)}
	\dim_\bQ \HFK(L;\bQ)\le 12,
\end{equation}
where $\HFK$ is the knot Floer homology defined in \cite{Ras:HFK,OS:HFK}.
Then at least one of the following holds:
\begin{enumerate}
	\item $K$ is the closure of an $l$-braid with axis $U$.
	\item $l=1$, $K$ is an unknot.
	\item  $l=1$, $K$ is a trefoil, $U$ is the meridian of $K$.
\end{enumerate}
\end{Proposition}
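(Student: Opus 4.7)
The plan is to analyze the Alexander filtration on $\HFL(L;\bQ)$ in the direction of the unknot component $U$, and to apply Martin's braid-detection property of link Floer homology with $U$ as the candidate braid axis. Throughout, use the standard identity $\dim_{\bQ}\HFL(L;\bQ)=2\dim_{\bQ}\HFK(L;\bQ)\le 24$ valid for two-component links, and let $n$ denote the maximal $U$-Alexander grading supporting $\HFL(L;\bQ)$. By the Thurston-norm detection of link Floer homology, $n\ge l/2$, with equality iff a meridian disk of $U$ realises the minimum geometric intersection with $K$, which in turn is equivalent to $K$ being isotopic to a closed $l$-braid with axis $U$. So $n=l/2$ yields conclusion (1), and the rest of the argument addresses the case $n>l/2$.

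When $n>l/2$, the combination of Martin's detection criterion and the surgery/spectral-sequence relation between $\HFL(L;\bQ)$ and $\HFK(K;\bQ)$ obtained by filling in $U$ forces additional generators in $\HFL(L;\bQ)$. More precisely, each Alexander summand $\HFL(L,*,s)$ for $|s|\le n$ should contribute at least as many generators as are required by $\HFK(K;\bQ)$, together with a strict extra contribution at the extremal gradings $s=\pm n$ coming from the failure of the braid condition. Exploiting the conjugation symmetry of link Floer homology and the bound $\dim_{\bQ}\HFL(L;\bQ)\le 24$ then constrains both $l$ and $n$ to a very small range.

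Enumerating cases: if $K$ is the trefoil then $\dim_{\bQ}\HFK(K;\bQ)=3$ is already relatively large, so the bound forces $l=1$ and $n$ minimal. In this case, a direct inspection of the individual Alexander-grading dimensions, combined with the fiberedness and genus-detection properties of $\HFL$, identifies $U$ as the meridian of $K$, giving conclusion (3). If instead $K$ is the unknot, then $\dim_{\bQ}\HFK(K;\bQ)=1$, and the same bookkeeping forces $l=1$; no further identification is available or required, yielding conclusion (2). In all remaining sub-cases (in particular all $l\ge 2$), $K$ must be braided about $U$ and conclusion (1) holds.

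The main obstacle is expected to be the identification step in case (3): showing that $U$ must be a meridian of the trefoil $K$ rather than some other unknotted curve of linking number $1$ in $S^3\setminus K$ with $\dim_{\bQ}\HFK(L;\bQ)\le 12$. This likely requires going beyond total ranks to examine the individual Alexander-graded pieces of $\HFL$, and using the recognition of the Thurston norm by link Floer homology to pin down the isotopy class of $U$ in the trefoil complement.
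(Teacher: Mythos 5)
Your proposal contains a fundamental logical error in the very first step, before the cases diverge. You claim that the top nonzero $U$-Alexander grading $n$ satisfies $n=l/2$ if and only if $K$ is a closed $l$-braid with axis $U$. This is false, and the paper is careful to distinguish two quite different statements here: the Thurston-norm detection (Proposition~\ref{prop_HFL_Thurston_norm}) says that $n=l/2$ iff $U$ bounds a disk meeting $K$ transversely in exactly $l$ points, whereas Martin's braid detection (Proposition~\ref{prop_braid_detection_HFL}) says that $K$ is a braid closure with axis $U$ iff $\dim_\bQ\HFL(L,n;\bQ)=2$ --- a condition on the \emph{dimension} of the top summand, not merely the \emph{location} of the top grading. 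Your identification of the two would immediately declare, for $L_2$ itself (trefoil union its meridian, where $l=1$ and the meridian disk meets $K$ in one point so $n=1/2$), that $K$ is a $1$-braid, i.e.\ unknotted --- a contradiction with $K$ being a trefoil, and also with your own later discussion of this case. So the proposal is internally inconsistent as written.

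Beyond this, the hard part of the paper's proof is not where you locate it. You predict the obstacle will be identifying $U$ as the meridian in case~(3), and you gesture at ``fiberedness and genus-detection properties of $\HFL$'' and a ``surgery/spectral-sequence relation'' with $\HFK(K)$; none of these appears in the actual argument. What the paper really needs is the mod-$4$ parity theorem (Proposition~\ref{prop_HFL_top_grading_parity}): when $U$ has a Seifert disk meeting $K$ in $l$ points, $\dim_\bQ\HFL(L,l/2;\bQ)\equiv 2\pmod 4$. This, together with the constraint $\dim_\bQ\HFL(L;\bQ)\le 12$ and the even/symmetric structure of the $U$-Alexander gradings, splits the problem into the four top-grading dimensions $2,4,4,6$; the dimension-$4$-at-$s=l/2$ case is killed precisely by the parity theorem, and the remaining non-braid cases are handled by an explicit Alexander-polynomial analysis via Torres' condition (Lemmas~\ref{lem_supp_2_f-grading} and~\ref{lem_Alex_4_degrees}). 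None of that machinery is present or anticipated in your outline; what you propose in its place (``each Alexander summand should contribute at least as many generators as are required by $\HFK(K;\bQ)$'') is not a correct inequality and would not close the argument. A smaller discrepancy: the paper works with $\dim_\bQ\HFL(L;\bQ)=\dim_\bQ\HFK(L;\bQ)\le 12$, not $2\dim_\bQ\HFK\le 24$, so your dimension budget is also off by a factor of two in the paper's conventions.
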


Recall that a 2-component link $K_1\cup K_2$ is said to be \emph{exchangeably braided} (or {\it mutually braided}) if both $K_1$ and $K_2$ are unknots, $K_1$ is a braid closure with axis $K_2$, and $K_2$ is a braid closure with axis $K_1$. The concept of exchangeably braided links was introduced and studied by Morton \cite{Mor-braid}. We will also need the following result from \cite{XZ:rank8}.

\begin{Proposition}[{\cite[Corollary 3.9]{XZ:rank8}}]
\label{prop_linking_number_3}
Suppose $L$ is an exchangeably braided link with linking number $l\ge3$,
then we have $
\rank_{\bQ}\HFK(L;\bQ)\ge 12.
$
Moreover, if $l>3$, then
$\rank_{\bQ}\HFK(L;\bQ)>12.$
\end{Proposition}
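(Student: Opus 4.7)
The plan is to exploit the Torres conditions on the multivariable Alexander polynomial together with Ozsv\'ath--Szab\'o's identification of the graded Euler characteristic of link Floer homology with the Alexander polynomial, and then to use the formula $\rank_{\bQ}\HFK(L;\bQ)=2\rank_{\bQ}\HFL(L;\bQ)$ that holds for any 2-component link.

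First, since both components of an exchangeably braided link are unknots and the linking number is $l$, the Torres formula gives, up to units,
\[
\Delta_L(t_1,1)\doteq 1+t_1+\cdots+t_1^{\,l-1},\qquad \Delta_L(1,t_2)\doteq 1+t_2+\cdots+t_2^{\,l-1}.
\]
In particular the Newton polygon of $\Delta_L(t_1,t_2)\in\bZ[t_1^{\pm 1},t_2^{\pm 1}]$ meets at least $l$ distinct columns and $l$ distinct rows of $\bZ^2$, with every column-sum and row-sum in the relevant range being $\pm 1$. By the Ozsv\'ath--Szab\'o theorem identifying the bigraded Euler characteristic of $\HFL(L;\bQ)$ with a symmetrized version of $\Delta_L(t_1,t_2)$, every bigrading carrying nonzero Euler characteristic contributes at least one generator to $\HFL(L;\bQ)$.

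Next, the conjugation symmetry $\HFL(L;-s_1,-s_2)\cong\HFL(L;s_1,s_2)$, combined with Ozsv\'ath--Szab\'o's Thurston-norm / Alexander-polytope detection applied to the $l$-punctured disks that $K_1$ and $K_2$ bound in the exterior of the other component, should pin down at least $2l$ supported bigradings of $\HFL(L;\bQ)$ whenever $l\geq 3$. One could alternatively compute $\Delta_L(t_1,t_2)$ directly via the reduced Burau representations of the two braids realizing the exchange, since the exchangeable structure makes this polynomial explicit. Combining this with $\rank_{\bQ}\HFK(L;\bQ)=2\rank_{\bQ}\HFL(L;\bQ)$ would yield $\rank_{\bQ}\HFK(L;\bQ)\geq 4l$, which is $\geq 12$ for $l\geq 3$ and strictly greater than $12$ for $l>3$, as required.

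The main obstacle is the promotion of the Euler-characteristic count of $l$ supported bigradings (coming from Torres alone) to a bigrading-support count of $2l$. A priori the Newton polygon of $\Delta_L$ could concentrate on a diagonal with only $l$ lattice points, so the additional input from the exchangeable braid hypothesis, either through an explicit Burau-based computation of $\Delta_L(t_1,t_2)$ or through a refined Alexander-polytope / Thurston-norm detection argument exploiting the conjugation symmetry of $\HFL$, is essential for closing the factor-of-two gap in the rank bound.
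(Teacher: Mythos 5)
Your overall plan — pass through the Torres conditions, use Ozsv\'ath--Szab\'o's identification of the graded Euler characteristic of $\HFL$ with $(T_1^{1/2}-T_1^{-1/2})(T_2^{1/2}-T_2^{-1/2})\Delta_L$, and count supported Alexander gradings — is in the right ballpark, but as written the argument has two genuine gaps and one error.

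\textbf{The normalization of $\HFK$ versus $\HFL$ is wrong.} You assert $\rank_{\bQ}\HFK(L;\bQ)=2\rank_{\bQ}\HFL(L;\bQ)$ for a $2$-component link. That is not the convention used in this paper: in the proof of Proposition~\ref{prop_rank_12_braid} the authors write $\dim_{\bQ}\HFL(L;\bQ)=\dim_{\bQ}\HFK(L;\bQ)$, i.e.\ the two groups have the \emph{same} total rank. With the correct identification, pinning down $2l$ bigradings in the support of $\HFL$ would only give $\rank_{\bQ}\HFK(L;\bQ)\ge 2l$, which for $l=3$ is $6$ and nowhere near $12$. To reach $12$ you in fact need $\rank_{\bQ}\HFL(L;\bQ)\ge 4l$, i.e.\ you need an average of two generators per supported bigrading, not one. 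Your factor of two is introduced in the wrong place and does not compensate.

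\textbf{The key step is asserted, not proved.} You write that the conjugation symmetry of $\HFL$ together with the Thurston-norm / Alexander-polytope detection ``should pin down at least $2l$ supported bigradings'' and then flag this as ``the main obstacle.'' That is precisely the mathematical content of the statement, and leaving it as a heuristic means the proof is not complete. The concern you raise — that $\Delta_L$ could be supported on a diagonal with only $l$ lattice points — does not actually block the needed bound for the torus link examples (for $T(2,2l)$ one computes $\|(t_1^{1/2}-t_1^{-1/2})(t_2^{1/2}-t_2^{-1/2})\Delta_L\|=4l$ directly), but a general argument deducing $\ge 4l$ from the Torres conditions alone requires an inequality about total variation of the row/column sums of $\Delta_L$ that you have not established, and it is not clear that Torres alone is sufficient without invoking the exchangeable-braid structure (e.g.\ via Morton's description of such links or a Burau computation). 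Since the cited result \cite[Corollary~3.9]{XZ:rank8} is stated specifically for exchangeably braided links rather than for arbitrary $2$-component links with unknotted components, that extra hypothesis is almost certainly load-bearing, and your sketch does not make clear how it enters beyond the generic Torres constraints. As it stands the proposal is an outline that identifies the right circle of ideas but does not constitute a proof.
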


We will prove Theorem \ref{thm_main_detection} in Section \ref{sec_detection} as a consequence of Proposition \ref{prop_rank_12_braid}, Proposition \ref{prop_linking_number_3}, Dowlin's rank inequality \cite[Corollary 1.7]{Dowlin}, and Batson-Seed's spectral sequence \cite{Kh-unlink}.
\\

{\bf Acknowledgement.} The first author is supported by his advisor Tom Mrowka's NSF Grant 1808794.

\section{Link Floer homology}
\label{sec_link_floer}

This section reviews the basic properties of link Floer homology and proves a result on the rank of link Floer homology that will play an important role in the proof of Proposition \ref{prop_rank_12_braid}.

The link Floer homology was originally defined for ${\mathbb{Z}/2}$--coefficients by Ozsv\'ath and Szab\'o in \cite{OS:HFK}, and was generalized to $\bZ$--coefficients in \cite{Sar-HFL}. We will work with $\bQ$--coefficients in order to invoke Dowlin's spectral sequence \cite{Dowlin}. For the rest of this section, all Floer homology groups are with $\bQ$--coefficients and it will be omitted from the notation.

Given an oriented $n$--component link $L\subset S^3$,
its link Floer homology $\HFL(L)$ carries a homological grading over $\bZ$ and $n$ Alexander 
gradings associated to the $n$ components of $L$. The Alexander grading
 associated to the $i$-th component $K_i$ takes values in either $\bZ$ or $\bZ+\frac12$, which 
depends on the parity of the linking number $\lk(K_i,L-K_i)$.

By \cite{OS:HFL}, when $n\ge 2$, the link Floer homology recovers the multi-variable Alexander polynomial in the following sense:
\begin{multline}\label{eq_HFL_Alex}
\sum_{a_1,\cdots,a_n}\chi\big(\HFL(L,a_1,\cdots,a_n)\big) \cdot T_1^{a_1}\cdots T_n^{a_n}
\\
\doteq (T_1^{1/2}-T_1^{-1/2})\cdots 
(T_n^{1/2}-T_n^{-1/2})\Delta_L(T_1,\cdots,T_n),
\end{multline}
where $\HFL(L,a_1,\cdots,a_n)$ is the component of $\HFL(L)$ with multi-Alexander grading $(a_1,\cdots,a_n)$, and $\chi(\cdot)$ denotes the Euler characteristic with respect to the homological grading. The notation "$\doteq$" 
means that the two sides are equal up to a multiplication by $\pm T_1^{b_1}\cdots T_n^{b_n}$
for some $b_1,\cdots b_n\in \frac12 \bZ$. There is also a symmetry
\begin{equation}\label{eq_HFL_symmetry}
\HFL(L,a_1,\cdots,a_n)\cong \HFL(L,-a_1,\cdots,-a_n).
\end{equation}

The following proposition is a special case of the Thurston norm detection property of link Floer homology.
\begin{Proposition}[{\cite[Theorem 1.1]{OS:HFL_Thurston_norm}}]\label{prop_HFL_Thurston_norm}
Suppose $L=K\cup U\subset S^3$ is a 2-component link 
with an unknotted component $U$ and
$l=|\lk(K,U)|>0$. 
Then  
the top  Alexander grading of $\HFL(L)$ associated to $U$ is $\frac{l}{2}$ if and only
if $U$ has a Seifert disk that intersects $K$ transversely at  $l$ points.
\end{Proposition}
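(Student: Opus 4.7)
The plan is to deduce this statement as a direct specialization of the Ozsv\'ath--Szab\'o Thurston norm detection theorem for link Floer homology (cited just before the proposition), combined with an elementary Thurston norm computation in the link exterior. I would extract the biconditional by evaluating the general theorem in the case at hand, then verifying the required Thurston norm equality geometrically.

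First, I would fix notation: let $X=S^3\setminus\nu(L)$ and let $[\Sigma_U]\in H_2(X,\partial X;\bZ)$ denote the class carried by any Seifert surface for $U$ intersected with $X$, i.e.\ the unique relative class whose boundary is a preferred longitude of $U$ together with $l$ meridians of $K$. Applying the Thurston norm detection theorem and projecting the Alexander bi-grading onto the $U$-coordinate, one obtains a formula of the shape
\[
2A_{\max}(U)\;=\;x\bigl([\Sigma_U]\bigr)+1,
\]
where $x$ denotes the Thurston norm and $A_{\max}(U)$ is the top Alexander grading of $\HFL(L)$ associated to $U$; the $+1$ records the single boundary component along $\partial\nu(U)$. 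In particular, the stated equality $A_{\max}(U)=l/2$ is equivalent to $x([\Sigma_U])=l-1$.

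Next, I would compute $x([\Sigma_U])$ by a direct Euler characteristic count. Any Seifert surface $\Sigma\subset S^3$ for $U$ may be isotoped into general position so that it meets $K$ transversely in some $p\ge l$ points, with equality precisely when the geometric and algebraic intersection numbers with $K$ agree. The restriction $\Sigma\cap X$ then represents $[\Sigma_U]$ and has Euler characteristic $1-2g(\Sigma)-p$, so $\chi_{-}(\Sigma\cap X)=\max\{2g(\Sigma)+p-1,\,0\}$. Minimizing over $g(\Sigma)\ge 0$ and $p\ge l$ yields $x([\Sigma_U])\ge l-1$, with equality precisely when $\Sigma$ can be chosen to be a disk ($g(\Sigma)=0$) meeting $K$ in the minimal number $p=l$ of points.

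Combining the two steps, I would conclude that $A_{\max}(U)=l/2$ if and only if $x([\Sigma_U])=l-1$, which in turn holds if and only if $U$ bounds a Seifert disk intersecting $K$ transversely in $l$ points. The main obstacle is tracking the normalizations when specializing the full Thurston polytope theorem down to this single-coordinate statement, in particular accounting correctly for the boundary components of $\Sigma\cap X$ on $\partial\nu(K)$ versus those on $\partial\nu(U)$; once that bookkeeping is in place, the geometric half of the argument is a routine minimization of Euler characteristic.
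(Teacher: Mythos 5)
The paper does not actually supply a proof of this proposition: it is quoted verbatim as a citation to Ozsv\'ath--Szab\'o's Thurston-norm theorem, and the remark immediately after only discusses extending the coefficient ring. So your task was really to reconstruct the deduction, and the overall strategy you chose --- specialize \cite[Theorem 1.1]{OS:HFL_Thurston_norm} to the $U$-direction, extract $2A_{\max}(U)=x([\Sigma_U])+1$, and then analyze the Thurston norm of the relative Seifert class of $U$ --- is the natural and correct one. The normalization in your displayed identity is right: with $h=PD[\Sigma_U]$ one gets $\langle h,\mu_U\rangle=1$ and $\langle h,\mu_K\rangle=0$, so the meridian-correction term is $1$.

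However, the Thurston norm computation in the second step has a real gap, and it is not merely normalization bookkeeping. You compute $\chi_-(\Sigma\cap X)=2g(\Sigma)+p-1$ for surfaces that arise by intersecting an ambient Seifert surface of $U$ with the exterior $X$, and then \emph{minimize over this family} to conclude $x([\Sigma_U])\ge l-1$. But the Thurston norm is a minimum over \emph{all} properly embedded surfaces in $X$ representing the class, not just those of the form $\Sigma\cap X$; minimizing an upper bound over a restricted family cannot yield a lower bound on $x$. Moreover, your phrase ``with equality precisely when $\Sigma$ can be chosen to be a disk meeting $K$ in $l$ points'' is exactly the biconditional you are trying to prove, so as written this step is circular. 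To close the gap you need two further ingredients: (i) a genuine lower bound $x([\Sigma_U])\ge l-1$ valid for arbitrary representatives, which can be obtained either directly from the Ozsv\'ath--Szab\'o formula together with the nonvanishing of $\HFL$ in $U$-grading $l/2$ (a consequence of the Torres formula, as in \eqref{eqn_non-vanishing_at_deg_l/2}), or by a geometric argument discarding sphere and compressible disk components and capping the $\partial N(K)$-boundary curves with pushed-in meridian disks; and (ii) for the converse direction, a boundary-cleanup argument showing a norm-minimizing representative of $[\Sigma_U]$ may be taken to have exactly one longitudinal boundary circle on $\partial N(U)$ and exactly $l$ coherently oriented meridional circles on $\partial N(K)$, so that $\chi_-=l-1$ forces genus zero and capping produces the desired Seifert disk. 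Neither step is automatic, and both should be spelled out rather than absorbed into ``routine minimization.''
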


\brem
The proof of \cite[Theorem 1.1]{OS:HFL_Thurston_norm} was originally given for $\bZ/2$--coefficients, but the same argument applies to $\bQ$--coefficients. Alternatively, a similar norm-detection property for instanton Floer homology was established by \cite{li2019decomposition} using sutured manifold decompositions and the formal properties of Floer homology, and the same argument can be carried over to Heegaard Floer homology with $\bQ$--coefficients.
\erem

The following is a weaker version of a result from \cite{Martin:T26}.
\begin{Proposition}[{\cite[Corollary 2]{Martin:T26}}]\label{prop_braid_detection_HFL}
Let $L=K\cup U\subset S^3$ be a 2-component link such that $U$ an unknot and
$l=|\lk(K,U)|>0$. Then $K$ is the closure of a braid with axis $U$ if and only if 
the dimension of $\HFL(L)$ is 2 at the top Alexander grading associated to $U$.
\end{Proposition}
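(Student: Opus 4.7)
The plan is to invoke the sutured Floer homology decomposition formula of Juh\'asz and Ozsv\'ath--Szab\'o, which identifies the extremal Alexander grading piece of $\HFL$ with the sutured Floer homology of the cut-open sutured link exterior. By Proposition \ref{prop_HFL_Thurston_norm}, the top $U$-Alexander grading of $\HFL(L)$ equals $l/2$ precisely when $U$ bounds a disk $\Sigma$ meeting $K$ transversely in $l$ points; decomposing the sutured link exterior $(S^3\setminus N(L),\gamma_L)$ along such a $\Sigma$ produces a sutured manifold $(M,\gamma)$ with $M=(D^2\times I)\setminus N(T)$, where $T$ is an $l$-tangle from $D^2\times\{0\}$ to $D^2\times\{1\}$, together with an isomorphism
\[
\SFH(M,\gamma)\cong \HFL(L,\cdot,l/2).
\]
The proposition therefore reduces to the purely topological statement that $\SFH(M,\gamma)$ has rank $2$ if and only if $T$ is isotopic to a braid. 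The auxiliary case where the top grading strictly exceeds $l/2$ (i.e.\ where no Seifert disk of $U$ meets $K$ minimally) can be ruled out by running the same decomposition along a minimal-complexity Seifert surface of positive genus and observing that the resulting extra handles force $\dim\SFH\ge 3$.

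For the forward direction, if $K$ is a closed $l$-braid with axis $U$ we choose $\Sigma$ so that $T$ is literally a braid. Since any $l$-braid in $D^2\times I$ is ambient isotopic to the trivial $l$-braid, $(M,\gamma)$ is independent of the braid up to sutured isomorphism, and an explicit model computation (for instance by splitting off strands through vertical product disks and reducing to the $l=1$ Hopf-link baseline, where $\HFL$ is known to have rank $2$ at the top grading) yields $\SFH(M,\gamma)\cong\bQ^2$.

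For the backward direction, assume $\SFH(M,\gamma)\cong\bQ^2$. The goal is to conclude that $T$ is isotopic to a braid, equivalently that the height function $\pi\colon D^2\times I\to I$ restricted to $T$ has no critical points. Were there such a critical point, a horizontal disk just above or just below it would meet $T$ transversely in more than $l$ points and be essential in $(M,\gamma)$; decomposing along this disk as the first step of a sutured hierarchy would produce additional independent generators of $\SFH$, contradicting the rank-$2$ hypothesis. Converting the numerical constraint $\dim\SFH=2$ into this geometric statement that every strand of $T$ is monotone in the $I$-direction is the \textbf{main obstacle}; I would attack it by combining Juh\'asz's product-sutured-manifold criterion for $\SFH\cong\bQ$ with a careful analysis of horizontal decomposing surfaces in tangle exteriors, appealing if needed to Ni's fibered-link detection theorem to tame the critical-point combinatorics.
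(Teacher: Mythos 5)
First, note that the paper does not prove Proposition~\ref{prop_braid_detection_HFL}; it is quoted as Corollary~2 of \cite{Martin:T26}, so there is no internal proof to compare against. That said, your attempt to reconstruct an argument is in a reasonable spirit (sutured decomposition along a Seifert disk of $U$, as the paper itself does in Proposition~\ref{prop_HFL_top_grading_parity}), and the forward direction is essentially fine. The backward direction, however, has a genuine gap, and the ``horizontal disk'' argument you propose does not work.

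Two specific problems. First, you never reduce from rank~$2$ to rank~$1$. With the sutured convention of Proposition~\ref{lem: HKL and SFH} (two meridians on each boundary torus), decomposing along a disk of $U$ meeting $K$ in $l$ points gives a suture $\gamma$ that has \emph{three} parallel meridians on one strand of the tangle; by Proposition~\ref{prop: tensor Q^2}, $\SFH(M,\gamma)\cong\SFH(M_T,\gamma_T)\otimes\bQ^2$. So the hypothesis $\dim\SFH(M,\gamma)=2$ is the statement $\dim\SFH(M_T,\gamma_T)=1$, and it is this rank-one statement that feeds into Juh\'asz's criterion (a taut balanced sutured manifold with $H_2=0$ and $\SFH\cong\bQ$ is a product). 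Without this bookkeeping, Juh\'asz's theorem, which you invoke at the end, does not directly apply. Second, and more seriously, the ``critical point $\Rightarrow$ horizontal disk $\Rightarrow$ extra generators'' reasoning is not valid. A nontrivial tangle can have a height function with critical points in \emph{every} representative without any level disk being essential, and even when a level disk is essential, a sutured decomposition along it produces a \emph{direct summand} of $\SFH(M,\gamma)$, so it can only decrease, never increase, the rank; there is no contradiction to be extracted that way. The correct argument is: $\dim\SFH(M_T,\gamma_T)=1$ implies $(M_T,\gamma_T)$ is a product sutured manifold, and then a separate (purely topological, non-Floer) argument is needed to show that a product sutured structure on a tangle exterior forces the tangle to be a braid; this last step, which you call the ``main obstacle,'' is exactly what remains unaddressed and is not resolved by ``tamely'' combining fiberedness detection with ad hoc analysis.

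Finally, your treatment of the case in which the top Alexander grading exceeds $l/2$ is also only a gesture: ``extra handles force $\dim\SFH\ge3$'' is not justified. One cleaner route is to note that if the top grading piece had rank~$2$ one could still run the product argument with a minimal-complexity Seifert surface $\Sigma$ of $U$ of genus $g>0$; the resulting product structure would make $S^3\setminus N(L)$ fibered with fiber $\Sigma$, and filling in $N(K)$ along meridians would exhibit the solid torus $S^3\setminus N(U)$ as fibered with a positive-genus fiber, which is impossible. This is the kind of argument you would need to make precise.
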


The link Floer homology $\HFL$ can be interpreted by sutured Floer homology using the following proposition. Here we use $\SFH$ to denote the sutured Floer homology defined by Juh\'asz in \cite{Juhasz-holo-disk}.

\bprop[{\cite[Proposition 9.2]{Juhasz-holo-disk}}]\label{lem: HKL and SFH}
Suppose $L=K_1\cup...\cup K_n$ is an oriented link, and let $S^3-N(L)$ be the link complement. Let $\ga$ be a suture on $\partial (S^3-N(L))$ which consists of two meridians of each $K_i$. Then there is an isomorphism
$$\HFL(L)\cong \SFH(S^3-N(L),\ga).$$
Moreover, the Alexander grading associated to $K_i$ corresponds to the grading induced by a Seifert surface of $K_i$ on $\SFH(S^3-N(L),\ga)$.
\eprop

\brem
The original statement is for $\intg/2$--coefficients, but the proof is done by examining the Heegaard diagrams, which also works for $\bQ$--coefficients.
\erem

We also need the following proposition from \cite{juhasz2010polytope}.
\bprop[{\cite[Proposition 9.2]{juhasz2010polytope}}]\label{prop: tensor Q^2}
Suppose $(M,\ga)$ is a balanced sutured manifold. Suppose $\ga_0$ is a component of $\ga$ that is homologically essential on $\partial M$. Let $\ga'$ be a suture on $\partial M$ obtained by adding two parallel copies of $\ga_0$ to $\ga$. Then we have
$$\SFH(M,\ga')\cong \SFH(M,\ga)\otimes_{\bQ}\bQ^2.$$
\eprop

The main result of this section is the following proposition.

\begin{Proposition}\label{prop_HFL_top_grading_parity}
Suppose $L=K\cup U\subset S^3$ is a 2-component link with an unknotted component $U$ and
$l=|\lk(K,U)|>0$, and suppose $U$ has a Seifert disk $D$ that intersects $K$ transversely at $l$ points.
Then 
$$
\dim_\bQ \HFL(L,\frac{l}{2})\equiv 2 ~{\rm mod}~4,
$$
 where $\HFL(L,l/2)$ is the component of $\HFL(L)$ with degree $l/2$ on the 
Alexander grading associated to $U$.
\end{Proposition}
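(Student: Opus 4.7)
The approach is to convert the claim to a statement about sutured Floer homology, perform a surface decomposition reducing to a sutured tangle complement, and extract a $\bQ^2$ tensor factor using Proposition \ref{prop: tensor Q^2}.

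First, by Proposition \ref{lem: HKL and SFH}, $\HFL(L) \cong \SFH(M, \gamma)$ with $M = S^3 \setminus N(L)$ and $\gamma$ consisting of two meridian sutures on each of the two boundary tori. Applying Juh\'asz's sutured manifold decomposition theorem to the punctured Seifert disk $D' := D \cap M$, the top-Alexander-grading piece (with respect to $U$) becomes
\[
\HFL(L, l/2) \cong \SFH(M', \gamma'),
\]
where $(M', \gamma')$ is the decomposed sutured manifold. Topologically, cutting the solid torus $V = S^3 \setminus N(U)$ along the meridian disk $D$ yields a $3$-ball $B = D^2 \times I$, and $K$ (which meets $D$ transversely in $l$ points, all of positive sign since $\lk(K,U) = l > 0$) becomes an $l$-strand tangle $\tau \subset B$ with endpoints equidistributed on $D^2 \times \{0\}$ and $D^2 \times \{1\}$. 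Thus $M' = B \setminus N(\tau)$.

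Second, I would describe the sutures $\gamma'$ on $\partial M'$ explicitly and identify a pair of parallel sutures to which Proposition \ref{prop: tensor Q^2} applies. The two meridian sutures on $\partial N(U)$, each intersecting $\partial D$ once, become two parallel arcs on the annular piece $\partial D^2 \times I \subset \partial M'$; together with extension arcs on the two copies $D'_\pm$ of the cut disk (prescribed by the decomposition rule), they close up to closed sutures on $\partial M'$. Combined with the two meridian sutures on $\partial N(K)$, which persist as circular sutures on the tubular neighborhood $\partial N(\tau)$, a careful homological analysis should exhibit three parallel essential sutures on $\partial M'$ (each isotopic to a longitudinal curve in the tangle exterior). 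Applying Proposition \ref{prop: tensor Q^2} to reduce these three parallel copies to one yields
\[
\SFH(M', \gamma') \cong \SFH(M'', \gamma'') \otimes_\bQ \bQ^2,
\]
so the claim reduces to showing that $\dim_\bQ \SFH(M'', \gamma'')$ is odd.

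Finally, I would verify the parity of $\dim_\bQ \SFH(M'', \gamma'')$ by iterating further surface decompositions of $(M'', \gamma'')$ along product disks or annuli until reaching a product sutured manifold $\Sigma \times I$ whose $\SFH$ is $\bQ$; product decompositions preserve the dimension of $\SFH$, so the resulting parity is odd. Combined with the $\bQ^2$-factor above, this gives $\dim_\bQ \HFL(L, l/2) \equiv 2 \pmod 4$.

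\textbf{The main obstacle} is the second step: carefully tracking the sutured decomposition conventions to exhibit three parallel essential sutures in $\gamma'$, especially the precise form of the extension arcs on $D'_\pm$ and their homological contribution on the genus-$l$ surface $\partial M'$. Once this configuration is in hand, the $\bQ^2$-factorization is immediate, and the final parity argument via iterated product decompositions is standard, though it requires verifying that $(M'', \gamma'')$ can indeed be reduced all the way to a genuine product sutured manifold.
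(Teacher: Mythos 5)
Your first step and the $\bQ^2$-factorization step follow the same route as the paper, though the description of the extra sutures is off in a way worth fixing: after decomposing along the punctured Seifert disk, the two additional sutures arising from the decomposition end up as \emph{meridians of a single tangle strand} (so that one of the strand boundary annuli $\partial N(\alpha_l)$ carries three parallel meridional sutures), not as ``longitudinal curves in the tangle exterior.'' It is precisely this local triple of parallel meridians that Proposition \ref{prop: tensor Q^2} is applied to, and the paper writes the resulting suture set out explicitly as $(l+3)$ components. You identify this as your ``main obstacle,'' and it is resolvable, so I would not call that a fatal flaw by itself.

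The genuine gap is your final step. You propose to show $\dim_\bQ \SFH(M'',\gamma'')$ is odd by ``iterating further surface decompositions along product disks or annuli until reaching a product sutured manifold $\Sigma\times I$,'' asserting that product decompositions preserve $\SFH$. While product decompositions do preserve $\SFH$, the sutured manifold $(M'',\gamma'') = (M_T,\gamma_T)$ is the complement of a general $l$-strand vertical tangle $T$ with one meridional suture per strand and one longitudinal suture on the outer annulus, and such a manifold is \emph{not} reducible to a product by product decompositions unless $T$ is already a product tangle. In general $K$ is knotted or the strands are braided inside the solid torus, $T$ has crossings, and $\dim_\bQ\SFH(M_T,\gamma_T)$ can be arbitrarily large --- it is certainly not $1$. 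This is exactly why the paper devotes Proposition \ref{thm: rank is odd for product tangle} (together with Lemmas \ref{lem: rank is odd for product tangle}, \ref{lem: crossing change}, and \ref{lem: rank is even for T_0}) to showing the \emph{parity} statement $\dim_\bQ\SFH(M_T,\gamma_T)$ odd: the argument runs a surgery exact triangle at each crossing, shows that the oriented-smoothing term $\SFH(M_{T_0},\gamma')$ always has even dimension (by another application of Proposition \ref{prop: tensor Q^2} in the two-strand case, or a connected-sum/further skein argument in the one-strand case), so parity is invariant under crossing changes, and then reduces to the product tangle by crossing changes rather than by surface decompositions. Your outline cannot be completed without replacing the ``decompose to a product'' step with a parity-invariance-under-crossing-change argument of this kind.
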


In order to prove Proposition \ref{prop_HFL_top_grading_parity}, we need to establish the following property of sutured Floer homology.

\begin{Proposition}\label{thm: rank is odd for product tangle}
Let $l\in\bZ^+$, let $T\subset [-1,1]\times D^2$ be a tangle given by $T=\alpha_1\cup\cdots \cup \alpha_l$, where $\alpha_i$ is an arc connecting $\{-1\}\times D^2$ and $\{1\}\times D^2$ for all $i$. Let $M_T=[-1,1]\times D^2-N(T)$, let $\gamma_T\subset \partial M_T$ be a suture on $M_T$ with $(l+1)$ components: one meridian component
on each one of $\partial N(\al_1),\cdots  
\partial N(\al_{l})$, and a component on $[-1,1]\times \partial D^2$ given by $\{\pt\}\times \partial D^2$.
Then $\dim_{\bQ}\SFH(M_T,\ga_T)$ is odd.
\end{Proposition}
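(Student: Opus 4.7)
The plan is to show that $(M_T,\gamma_T)$, viewed as a sutured manifold, is homeomorphic to a product sutured manifold, whose sutured Floer homology is well-known to be of rank one over $\bQ$.

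The first step is to reduce to the case where $T$ is a braid, i.e., each arc $\alpha_i$ is monotonic in the $[-1,1]$-coordinate. This reduction relies on the following classical fact from tangle theory: any tangle $T\subset [-1,1]\times D^2$ whose arcs all connect $\{-1\}\times D^2$ to $\{1\}\times D^2$ is ambient isotopic rel $\partial([-1,1]\times D^2)$ to a geometric braid, via a Morse-theoretic cancellation of local extrema of the height function $t|_T$, carried out in the $3$-dimensional room available to route around the other arcs. Since $\SFH$ depends only on the homeomorphism type of the sutured manifold, we may henceforth assume $T=\beta$ is a braid.

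For such a braid, the projection $\pi : M_\beta\to [-1,1]$ is a locally trivial fibration with fiber $D_l := D^2\setminus (l\text{ open subdisks})$. As the base $[-1,1]$ is contractible, this bundle admits a trivialization $\phi : M_\beta\xrightarrow{\sim} D_l\times [-1,1]$, which can be chosen so as to (i) restrict to the identity on the outer cylinder $[-1,1]\times\partial D^2$, and (ii) carry each tube $\partial N(\alpha_i)$ onto the standard inner cylinder $\partial_i D_l\times [-1,1]$. Under $\phi$, the $l+1$ components of $\gamma_T$ become horizontal circles $\partial_i D_l\times\{t_i\}$, one on each of the $l+1$ boundary cylinders of $D_l\times [-1,1]$; an isotopy of sutures brings all $t_i$ to $0$. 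Consequently $(M_T,\gamma_T)$ is homeomorphic as a sutured manifold to the product sutured manifold $(D_l\times [-1,1],\,\partial D_l\times \{0\})$, and by the standard computation (witnessed by a Heegaard diagram with no attaching curves) its sutured Floer homology is $\bQ$ of rank one. In particular, $\dim_\bQ\SFH(M_T,\gamma_T)=1$, which is odd.

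The main technical obstacle is the first step: rigorously establishing that every top-to-bottom tangle is isotopic to a braid requires a careful Morse-theoretic cancellation of critical pairs of the height function in the presence of multiple, possibly linked arcs. Once this reduction is in hand, the bundle trivialization and the identification of sutures, while requiring some care to make the meridional and longitudinal sutures simultaneously horizontal, are routine.
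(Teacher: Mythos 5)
Your proof fails at the very first step, and the failure is fatal. You assert as ``classical fact'' that every tangle $T\subset [-1,1]\times D^2$ whose arcs all run from $\{-1\}\times D^2$ to $\{1\}\times D^2$ is ambient isotopic rel boundary to a geometric braid. This is false. Already for $l=1$, the arc $\alpha_1$ may carry a local knot: take a straight strand and replace a small sub-arc by a knotted arc (say a trefoil) contained in a ball in the interior of $[-1,1]\times D^2$. No sequence of critical-point cancellations for the height function can remove such a knot, because the complement $[-1,1]\times D^2 - N(\alpha_1)$ is not homeomorphic to a solid torus --- it contains an incompressible torus (the boundary of the ball of the local knot) --- so it cannot be a product over $[-1,1]$. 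For $l\geq 2$ the arcs can also clasp and link each other in non-braid-like ways even when each arc is individually unknotted. Morse-theoretic cancellation of local maxima and minima of the height function fails precisely because one strand may obstruct the isotopy needed to cancel a critical pair on another strand (or on itself). Your argument would also prove too much: if every such tangle complement were a product sutured manifold, then $\dim_\bQ\SFH(M_T,\gamma_T)$ would always equal $1$, but the proposition only claims it is odd, and in fact for a locally knotted arc $\alpha_1$ the dimension is the knot Floer rank of the local knot, which can be any odd number $\geq 3$.

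The paper's actual proof handles the non-braid case differently: it shows that the \emph{parity} of $\dim_\bQ\SFH$ is invariant under crossing changes, via the surgery exact triangle relating $(M_T,\gamma_T)$, $(M_{T_-},\gamma_{T_-})$, and $(M_{T_0},\gamma')$, combined with the key lemma that the oriented resolution $(M_{T_0},\gamma')$ always has \emph{even} sutured Floer rank. Since any such tangle becomes the product tangle after finitely many crossing changes (this is the correct ``classical fact'' --- the tangle only needs to become trivial up to crossing changes, not up to isotopy), and the product tangle has SFH of rank $1$, the parity is odd throughout. Your reduction to the product case is correct only modulo crossing changes, which do change the sutured manifold, so you need the parity argument; the global isotopy you want does not exist.
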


We start the proof of Proposition \ref{thm: rank is odd for product tangle} by verifying the trivial case.

\blem\label{lem: rank is odd for product tangle}
If $T$ is a product tangle, i.e., there are points $p_1,...,p_n\subset {\rm int}(D^2)$ so that $\al_i=[-1,1]\times \{p_i\}$ for all $i$, then $\dim_{\bQ}\SFH(M_T,\ga_T)$ is odd.
\elem
\bpf
When $T$ is a product tangle, $(M_T,\ga_T)$ is a product sutured manifold. Hence it follows from \cite{Juhasz-holo-disk} that the dimension of $\SFH(M_T,\ga_T)$ is one.
\epf

Let $T$ be the tangle in Proposition \ref{thm: rank is odd for product tangle}.
Orient $T$ so that each $\al_i$ goes from $\{-1\}\times D^2$ to $\{1\}\times D^2$. Fix a diagram on $[-1,1]\times [-1,1]$ that represents the tangle $T$. We will also denote the diagram by $T$ when there is no source of confusion. For a positive crossing of $T$, we can perform surgeries along the curve $\be$ as depicted in Figure \ref{fig: surgery along beta}. Let $M_{T,-1}$ be the manifold obtained by performing the $(-1)$--surgery along $\be$, and let $M_{T,0}$ be the manifold obtained by performing the $0$--surgery along $\be$. Let $T_-$ be the tangle that only differs from $T$ at the crossing linked by $\be$ as depicted in Figure \ref{fig: surgery along beta}. It straightforward to show that $(M_{T,-1},\ga_{T})\cong(M_{T_-},\ga_{T_-})$.

\begin{figure}
\centering    
\begin{overpic}[width=\textwidth]{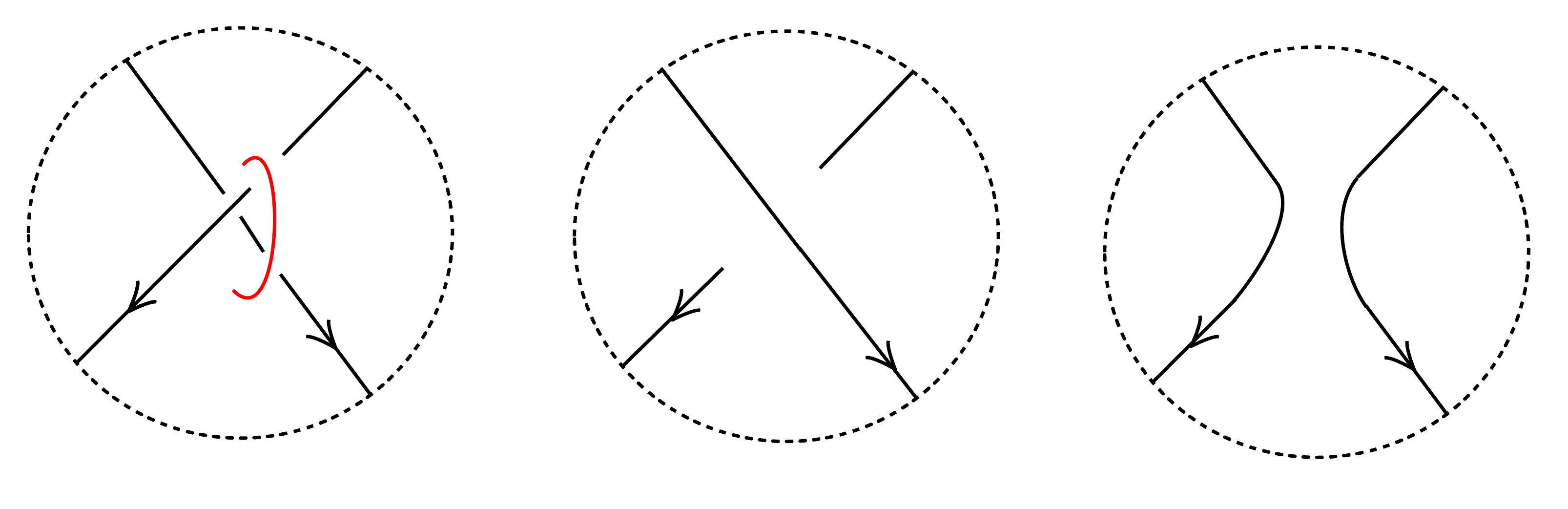}
    \put(19,18){$\be$}
    \put(13,0){$T$}
    \put(48,0){$T_-$}
    \put(83,0){$T_0$}
    \end{overpic}
\caption{Surgery along $\be$}\label{fig: surgery along beta}
\end{figure}

\bdefn\label{defn: crossing change}
We call the operation of switching from $T$ to $T_-$ or from $T_-$ to $T$ a {\it crossing change}.
\edefn

\blem\label{lem: crossing change}
For any vertical tangle $T\subset [-1,1]\times D^2$, there is a finite sequence of crossing changes that takes $T$ to the product tangle. \qed
\elem

Now we study the sutured manifold $(M_{T,0},\ga_{T})$. Inside $[-1,1]\times D^2$, the circle $\be$ bounds a disk $D$ that intersects the tangle $T$ twice. After performing the $0$--surgery, the boundary $\partial D$ can be capped by a meridian disk in the surgery solid torus, and hence we obtain a $2$--sphere $S$ that intersects the tangle $T$ twice. 
The intersection of $S$ and $M_{T,0}$ is a properly embedded annulus $A_{\be}\subset M_{T,0}$. We can pick the suture $\ga_{T}$ so that one boundary component of $A_{\be}$ lies in $R_+(\ga_{T})$ and the other lies in $R_-(\ga_{T})$. Then there is a sutured manifold decomposition
$$(M_{T,0},\ga_{T})\stackrel{A_{\be}}{\leadsto}(M',\ga').$$
From \cite[Section 3.1]{KM:Alexander}, we know that $M'\cong M_{T_0}:=[-1,1]\times D^2-N(T_0)$, where $T_0$ is another tangle on $[-1,1]\times D^2$, possibly having closed components, such that $T_0$ only differs from $T$ near the crossing linked by $\be$ as depicted in Figure \ref{fig: surgery along beta}.
\bdefn
We say that $T_0$ is obtained from $T_+$ by an oriented smoothing.
\edefn

\blem\label{lem: rank is even for T_0}
We have
$$\SFH(M_{T,0},\ga_{T})\cong \SFH(M_{T_0},\ga'),$$
and $\dim_{\bQ}\SFH(M_{T_0},\ga')$ is even.
\elem

\bpf
The isomorphism
$$\SFH(M_{T,0},\ga_{T})\cong \SFH(M_{T_0},\ga')$$
follows from \cite[Lemma 8.9]{Juh:sut}. For the parity statement, we argue in two cases.

{\bf Case 1}. The crossing linked by $\be$ involves two different components of $T$. Without loss of generality we can assume that they are $\al_1$ and $\al_2$. See Figure \ref{fig: oriented smoothing, 1}. Recall we have an annulus $A_{\be}\subset M_{T,0}$ after performing the $0$-surgery along $\be$. To make sure that the two boundary components of $A_{\be}$ lie in two different component of $R(\ga_{T})$, the suture $\ga_{T}$ must be arranged as in one of the two possibilities shown in Figure \ref{fig: oriented smoothing, 1}. After performing the sutured manifold decomposition along $A_{\be}$, the new tangle $T_0$ has two new arcs $\al_1'$ and $\al_2'$. For $i=1,2$, let $C_i'=\partial N(\alpha'_i)-\{-1,1\}\times D^2$. It is straightforward to check that, after the sutured manifold decomposition along $A_{\be}$, one and exactly one of the following two possibilities happens, as shown in Figure \ref{fig: oriented smoothing, 1}:
\begin{itemize}
\item $\ga'\cap C_1'$ consists of three parallel copies of meridians of $\al_1'$
\item $\ga'\cap C_2'$ consists of three parallel copies of meridians of $\al_2'$.
\end{itemize}

\begin{figure}
\centering    
\begin{overpic}[width=\textwidth]{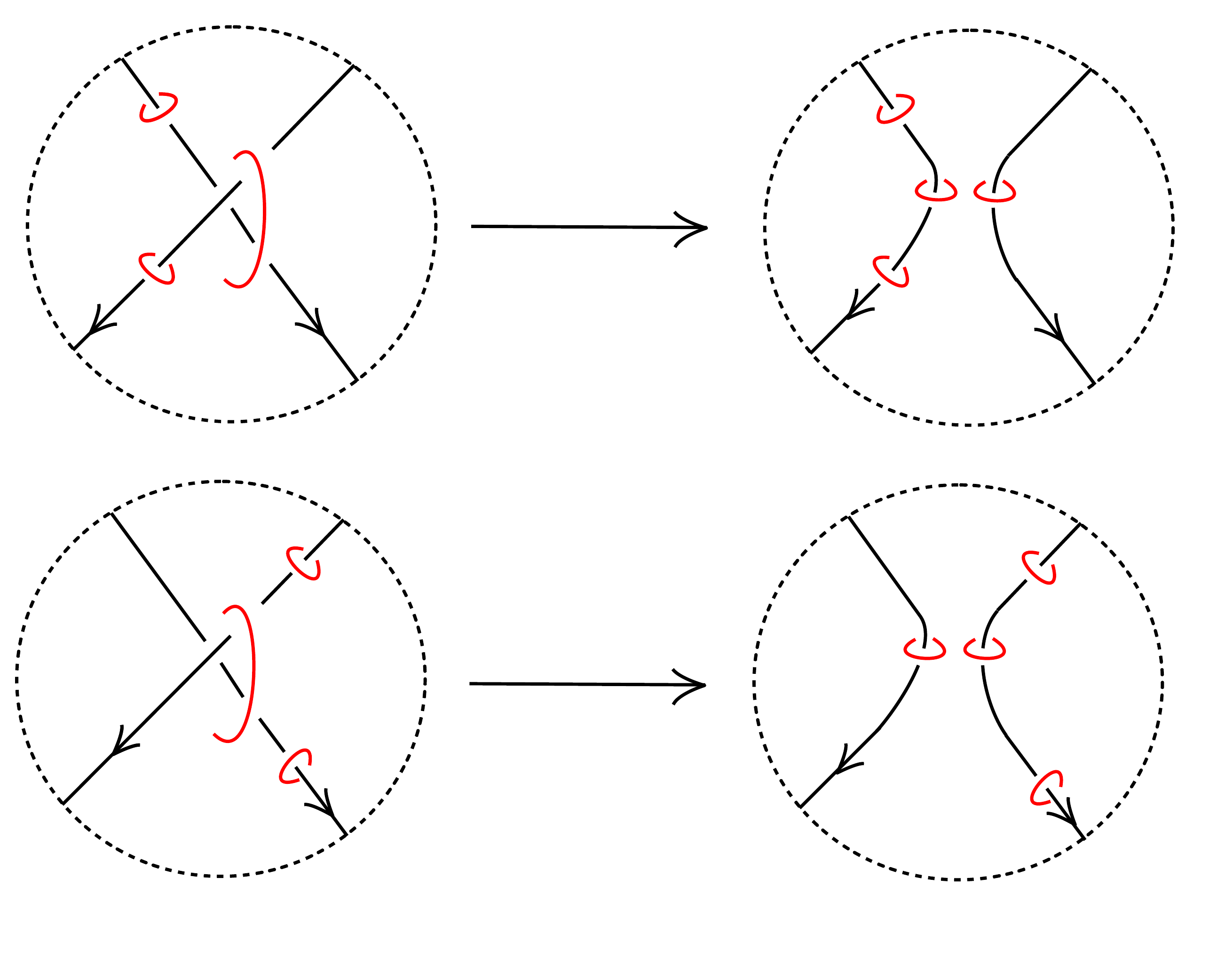}
    \put(22,25){$\be$}
    \put(26,31){$\ga_{T}$}
    \put(26,20){$\ga_{T}$}
    \put(23,62){$\be$}
    \put(7,70){$\ga_{T}$}
    \put(78,68){$\ga'$}
    \put(78,31){$\ga'$}
    \put(7,60){$\ga_{T}$}
    \put(16,4){$T$}
    \put(2,13){$\al_1$}
    \put(29,10){$\al_2$}
    \put(62,13){$\al'_1$}
    \put(89,10){$\al'_2$}
    \put(2,50){$\al_1$}
    \put(29,47){$\al_2$}
    \put(62,50){$\al'_1$}
    \put(89,47){$\al'_2$}
    \put(40,27){Decompose}
    \put(40,64){Decompose}
    \put(76,4){$T_0$}
    \end{overpic}
\caption{Oriented smoothing}\label{fig: oriented smoothing, 1}
\end{figure}

Without loss of generality, we assume that the first possibility happens, i.e., $\ga'$ contains three copies meridians of $\al'_1$. Removing two such copies, we obtain a new sutured manifold $(M_{T_0},\ga_{T_0})$, and by Proposition \ref{prop: tensor Q^2} we have
$${\rm dim}_{\bQ}\SFH(M_{T_0},\ga')=2\,{\rm dim}_{\bQ}\SFH(M_{T_0},\ga_{T_0}).$$
As a result, $\dim_{\bQ}\SFH(M_{T_0},\ga')$ is even.

{\bf Case 2}. The crossing linked by $\be$ involves only one component of $T$. Without loss of generality, we assume it is $\al_1$, see Figure \ref{fig: oriented smoothing, 2}. Let 
$C_i=\partial N(\al_i)-\{-1,1\}\times D^2$ for all $i$.  To make sure that the two boundary components of $A_{\be}$ lie in two different components of $R(\ga_{T})$, we must have the suture on $C_1$ to be in the position as depicted in Figure \ref{fig: oriented smoothing, 2}. After the decomposition along $A_\be$, the new tangle $T_0$ now has a closed component, which we call $\al_0'$, and an arc that we call $\al_1'$. Let $C_0'=\partial N(\al_0')$ and $C_1'=\partial N(\al_1')- \{-1,1\}\times D^2$. Note that $C_0'$ is a torus while $C_1'$ is an annulus. The suture $\ga'$ contains two meridians on $C_0'$, and one meridian on $C_1'$ (and one meridian on every other $C_i$).
\begin{figure}
\centering    
\begin{overpic}[width=\textwidth]{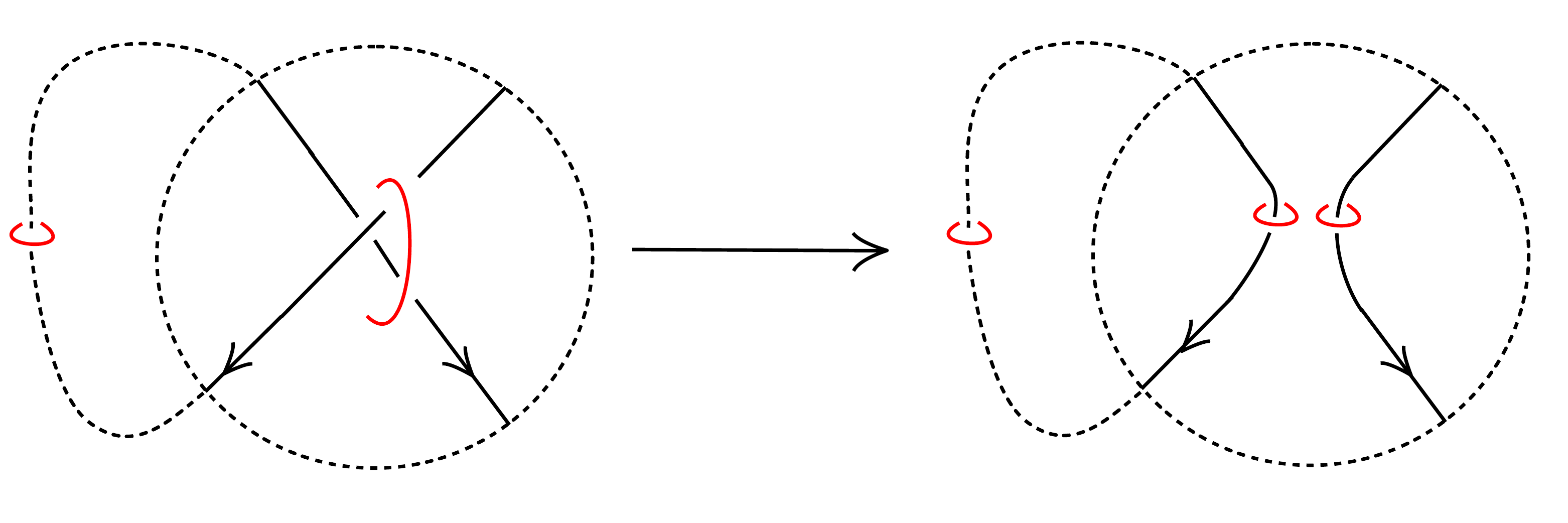}
    \put(32,29){$\al_1$}
    \put(92,29){$\al_1'$}
    \put(27,18){$\be$}
    \put(4,18){$\ga_{T}$}
    \put(64,18){$\ga'$}
    \put(77,19){$\ga'$}
    \put(88,19){$\ga'$}
    \put(74,31){$\al_0'$}
    \put(23,0){$T$}
    \put(40,20){Decompose}
    \put(83,0){$T_0$}
    \end{overpic}
\caption{Oriented smoothing}\label{fig: oriented smoothing, 2}
\end{figure}

Recall that our goal is to show that $\dim_\bQ\SFH(M_{T_0},\ga')$ is even. Write
$$T_0'=T_0\backslash\al'_0.$$

{\bf Case 2.1}. When $\al_0'$ is split from $T_0'$, i.e., there is a $3$-ball $B^3\subset (-1,1)\times D^2$ so that
$$B^3\cap T_0=\al_0'.$$
In this case, we know that $(M_{T_0},\ga')$ is a connected sum:
$$(M_{T_0},\ga')\cong(M_{T_0'},\ga'- B^3)\#(S^3(\al_0'),\ga'\cap B^3).$$
Here $S^3(\al_0')$ is the knot complement of the knot $\al_0'\subset B^3\subset S^3$. It then follows from \cite[Proposition 9.15]{Juhasz-holo-disk} that the dimension of $\SFH(M_{T_0},\ga')$ is even.

{\bf Case 2.2}. When $\al_0'$ is not split from $T_0'$. Suppse there is a positive crossing of $T_0$ involving both $\al_0'$ and $T_0'$. Pick the circle  $\theta$ as depicted in Figure \ref{fig: surgery along theta}. Suppose the component of $T_0'$ involved in the crossing is $\al'$. There is a surgery exact triangle associated to $\theta$:
\begin{equation*}
    \xymatrix{
    \SFH(M_{T_0},\ga')\ar[rr]&&\SFH(M_{T_{0,-}},\ga'_-)\ar[dl]\\
    &\SFH(M_{T_{0,0}},\ga'_0)\ar[lu]&
    }
\end{equation*}

\begin{figure}
\centering    
\begin{overpic}[width=\textwidth]{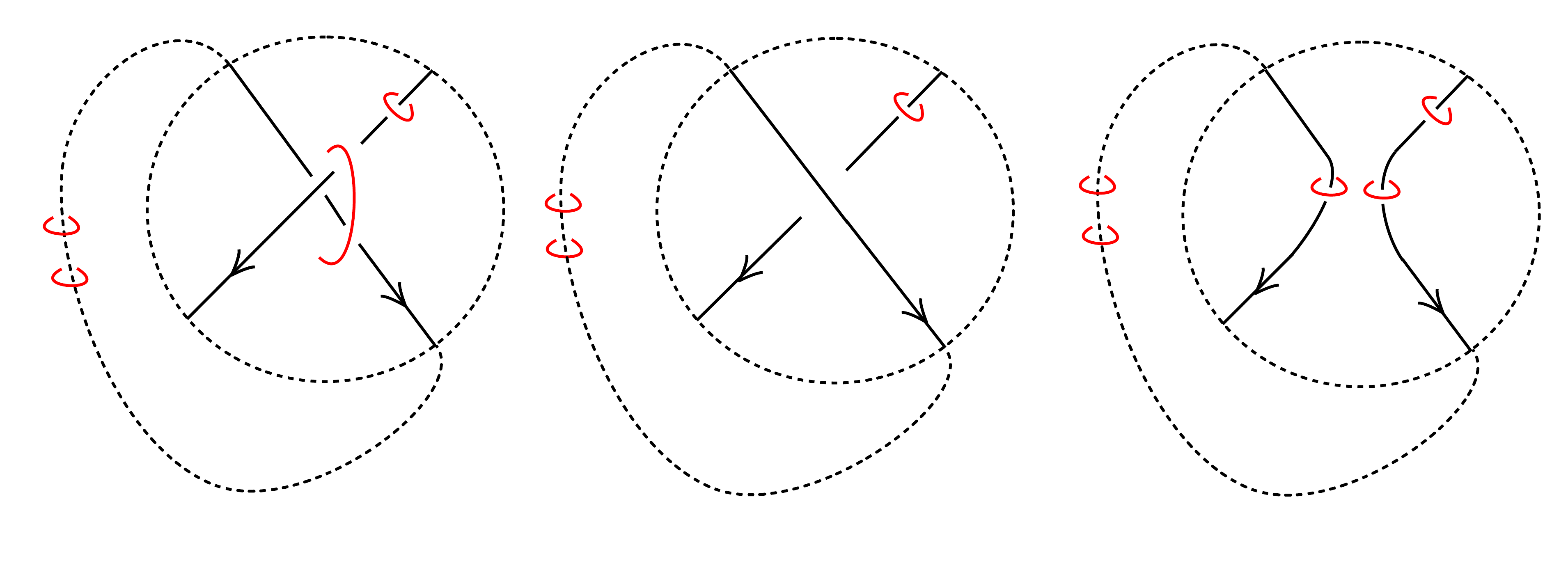}
    \put(23.5,23){$\theta$}
    \put(26,26.5){$\ga'$}
    \put(6,20){$\ga'$}
    \put(18,0){$T_0$}
    \put(28,10){$\al_0'$}
    \put(28,32){$\al'$}
    \put(61,10){$\al_0'$}
    \put(62,32){$\al'$}
    \put(94,10){$\al_0''$}
    \put(52,0){$T_{0,-}$}
    \put(86,0){$T_{0,0}$}
    \end{overpic}
\caption{Surgery along $\theta$}\label{fig: surgery along theta}
\end{figure}

As above, $T_{0,-}$ is obtained from $T_0$ by a crossing change and $(M_{T_{0,-}},\ga_-')$ is the corresponding sutured manifold. The tangle $T_{0,0}$ is obtained from $T_0$ by an oriented smoothing. As in Figure \ref{fig: surgery along theta}, $\al_0'$ and $\al'$ merge into a single component $\al''_0\subset T_{0,0}$. It is then straightforward to check that the new suture $\ga'_0$ consists of five meridians of $\al''_0$: The two meridians of $\al'_0$ and one meridian of $\al'$ all survive, and there are two more meridians coming from the decomposition along an annulus $A_{\theta}$ (similar to the annulus $A_{\be}$ above). By Proposition \ref{prop: tensor Q^2}, we know that the dimension of $\SFH(M_{T_{0,0}},\ga_0')$ is even, and hence
\begin{equation}\label{eq: same parity}
    \dim_{\bQ}\SFH(M_{T_0},\ga')\equiv\dim_{\bQ}\SFH(M_{T_{0,-}},\ga'_-)~{\rm mod}~2.
\end{equation}

However, $T_0$ and $T_{0,-}$ only differ by a crossing change, and following the same line of Lemma \ref{lem: crossing change}, there is a finite sequence of such crossing changes that makes $\al_0'$ split from $T_0'$. Hence, it follows from Case 2.1 and (\ref{eq: same parity}) that the dimension of $\SFH(M_{T_0},\ga')$ must be even. This concludes the proof of Lemma \ref{lem: rank is even for T_0}.
\epf

\bpf[Proof of Proposition \ref{thm: rank is odd for product tangle}]
There is a surgery exact triangle associated to $\be$:
\begin{equation*}
    \xymatrix{
    \SFH(M_{T},\ga_{T})\ar[rr]&&\SFH(M_{T_-},\ga_{T_-})\ar[dl]\\
    &\SFH(M_{T_0},\ga')\ar[lu]&
    }
\end{equation*}
Therefore Lemma \ref{lem: rank is even for T_0} implies $\dim_{\bQ}\SFH(M_{T},\ga_{T})\equiv \dim_{\bQ}\SFH(M_{T_-},\ga_{T_-})~\rm{mod}~2$.  Proposition \ref{thm: rank is odd for product tangle} then follows from Lemma \ref{lem: rank is odd for product tangle} and Lemma \ref{lem: crossing change}.
\epf

\begin{remark}
    The statement and the proof of Proposition \ref{thm: rank is odd for product tangle} can be applied to sutured monopole theory and sutured instanton theory as well (with suitable choices of coefficients).
\end{remark}

\begin{proof}[Proof of Proposition \ref{prop_HFL_top_grading_parity}]
Take a pair of oppositely oriented meridional sutures to each boundary component
of $S^3-N(L)$, then $S^3-N(L)$ becomes a balanced sutured manifold.

Decompose $S^3-N(L)$ along the disk $D$, we obtain a sutured manifold $(M,\gamma)$. The manifold $M$ is given by $[-1,1]\times D^2-N(T)$, where $T$ is a tangle in $[-1,1]\times D^2$. Since the linking number of $K$ and $U$ is equal to $|K\cap U|$, we have $T=\alpha_1\cup\cdots \cup \alpha_l$ where $\alpha_i$ is an arc from $\{1\}\times D^2$ to $\{-1\}\times D^2$ for each $i$. The suture $\gamma$ consists of $(l+3)$ components: one meridian
on each of $\partial N(\al_1),\cdots  
\partial N(\al_{l-1})$, three parallel meridians on $\partial N(\al_l)$, and one component on $[-1,1]\times \partial D^2$ given by $\{\pt\}\times \partial D^2$. We have
$$
\HFL(L,\frac{l}{2})\cong \SFH(M,\gamma).
$$
 Removing two sutures from $\partial N(\al_l)$, we obtain the sutured manifold $(M_T,\gamma_T)$ as in Proposition \ref{thm: rank is odd for product tangle}.
By Proposition \ref{prop: tensor Q^2}, we have
$$
\dim_\bQ \SFH(M_T,\gamma)=2 \dim_\bQ \SFH(M_T,\gamma_T).
$$
Therefore the desired result follows from 
Proposition \ref{thm: rank is odd for product tangle}.
\end{proof}

\section{Proof of Proposition \ref{prop_rank_12_braid}}
\label{sec_rank_12_braid}
The strategy of our proof of Proposition \ref{prop_rank_12_braid} is to exploit the properties of the multi-variable Alexander polynomial so that we can apply the braid detection property of link Floer homology by Martin \cite[Corollary 2]{Martin:T26}. The link Floer homology and the multi-variable Alexander polynomial are related by \eqref{eq_HFL_Alex}. 

Suppose $L$ is a 2-component link, let $\Delta_L(x,y)\in \bZ[x,y,x^{-1},y^{-1}]$ be the multi-variable Alexander polynomial of $L$. 
Then $\Delta_L(x,y)$ is \emph{a priori} only well-defined up to a multiplication by $\pm x^ay^b$. It is possible to normalize the Alexander polynomial, for example, using Equation \eqref{eq_HFL_Alex}. However, the Alexander polynomial normalized by \eqref{eq_HFL_Alex} can be a Laurent polynomial with half-integer exponents. For our purpose, it is more convenient to take $\Delta_L(x,y)$ as Laurent polynomial with integer exponents, and therefore we will not normalize $\Delta_L(x,y)$.

For $f_1,f_2\in \bZ[x_1,x_1^{-1},\cdots,x_n,x_n^{-1}]$, we write $f_1\doteq f_2$ if and only if there exists a multiplicative unit $\epsilon$ such that $f_1=\epsilon\, f_2$. 

For $f\in \bZ[x_1,x_1^{-1},\cdots,x_n,x_n^{-1}]$, we use $\|f\|$ to denote the sum of the absolute values of the coefficients of $f$. 
By \eqref{eq_HFL_Alex}, we have 
$$\rank_{\bQ}\HFK(L;\bQ)=\rank_{\bQ}\HFL(L;\bQ)\ge \|(1-x)(1-y)\Delta_L(x,y)\|.$$

We need the following result.
\begin{Theorem}[{\cite{Torres}}]\label{Torres-multi-ALexander}
Suppose $L=K_1\cup K_2$ is a 2-component link with multi-variable Alexander polynomial $\Delta_L(x,y)$, where $x,y$ are the variables associated to $K_1,K_2$ respectively. 
Then
we have
$$
\Delta_L(x,1) \doteq \frac{1-x^l}{1-x} \Delta_{K_1}(x),
$$
where $\Delta_{K_1}(x)$ is the Alexander polynomial of $K_1$ and $l=\lk(K_1,K_2)$.
\end{Theorem}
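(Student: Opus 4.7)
\textbf{Proof proposal for Theorem \ref{Torres-multi-ALexander}.} The plan is to give a geometric proof interpreting the specialization $y\mapsto 1$ as passing to an intermediate cover of the link exterior, and then to compare this cover to the infinite cyclic cover of the exterior of $K_1$ alone via a Mayer--Vietoris argument.

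First I would recall that $\Delta_L(x,y)$ is, up to $\doteq$, the order ideal of the Alexander module $H_1(\tilde X_L;\bZ)$ over $\Lambda = \bZ[x^{\pm},y^{\pm}]$, where $\tilde X_L$ is the maximal abelian cover of $X_L = S^3\setminus N(L)$. Setting $y=1$ corresponds to passing to the intermediate cover $\hat X_L \to X_L$ classified by $\mu_{K_1}\mapsto 1$, $\mu_{K_2}\mapsto 0$. Topologically, $\hat X_L \cong \tilde X_{K_1} \setminus N(\tilde K_2)$, where $\tilde X_{K_1}$ is the infinite cyclic cover of $X_{K_1}=S^3\setminus N(K_1)$ and $\tilde K_2 \subset \tilde X_{K_1}$ is the preimage of $K_2$. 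The crucial geometric observation is that $[K_2] = l \in H_1(X_{K_1}) \cong \bZ$, so for $l\neq 0$ the preimage $\tilde K_2$ is a single copy of $\bR$ covering $K_2$ with degree $l$, on which the deck transformation $x$ acts by unit translation.

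Next I would apply Mayer--Vietoris (or equivalently multiplicativity of Reidemeister torsion) to the decomposition $\tilde X_{K_1} = \hat X_L \cup N(\tilde K_2)$ with overlap $\partial N(\tilde K_2) \cong \bR \times S^1$, all viewed as chain complexes of $\Lambda_1 := \bZ[x^{\pm}]$-modules. A direct cell computation shows that $H_*(N(\tilde K_2))$ and $H_*(\bR\times S^1)$ carry trivial $x$-action and are supported in low degrees, so the long exact sequence allows one to relate the order ideals of $H_1(\hat X_L)$ and $H_1(\tilde X_{K_1})$. Combined with the identification that the lift of the longitude of $K_2$ represents the element $x^l-1 \in \Lambda_1$, one obtains schematically
\begin{equation*}
(x-1)\cdot \Delta_L(x,1) \doteq (x^l-1)\cdot \Delta_{K_1}(x),
\end{equation*}
which after rearrangement yields the Torres formula.

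The main obstacle is normalization. For a 2-component link the Alexander matrix is non-square, so the multi-variable polynomial and the order ideal of the Alexander module differ by an explicit $(x-1)$ or $(y-1)$ factor; combined with the $\doteq$ ambiguity and a choice of orientation for $K_2$, the $(x-1)$ appearing on the left and the $(x^l-1)/(x-1)$ on the right must be tracked with care through the Mayer--Vietoris/torsion computation. An alternative route that largely sidesteps this bookkeeping is to compute $\Delta_L(x,y)$ directly from Fox calculus on a Wirtinger presentation of $\pi_1(X_L)$: the $y=1$ specialization splits the Alexander matrix into a block presenting the Alexander module of $K_1$ together with a block whose determinant evaluates to exactly $1+x+\cdots+x^{l-1}$, coming from the $l$ signed crossings that witness the linking number. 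Either way, the geometric heart of the argument is that the linking number $l$ controls the lift of $K_2$ to the cyclic cover of $X_{K_1}$.
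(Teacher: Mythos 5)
The paper does not prove this statement; it cites the classical result of Torres (1953). So there is no in-paper proof to compare against, and the question is whether your sketch is correct on its own terms. The overall strategy — interpret $y\mapsto 1$ as passing to the intermediate $\bZ$-cover $\hat X_L$ classified by $\mu_{K_1}\mapsto 1$, $\mu_{K_2}\mapsto 0$, identify $\hat X_L\cong\tilde X_{K_1}\setminus N(\tilde K_2)$, and compare via Mayer--Vietoris or torsion multiplicativity, or alternatively run Fox calculus on a Wirtinger presentation — is reasonable and is in fact one of the standard routes to the Torres condition. The identification $\hat X_L\cong\tilde X_{K_1}\setminus N(\tilde K_2)$ is correct, as is the endgame algebra.

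There is, however, a genuine error at the geometric heart of the argument, and it is not merely cosmetic. You claim that for $l\neq 0$ the preimage $\tilde K_2$ is a single copy of $\bR$ covering $K_2$ with degree $l$, on which $x$ acts by unit translation, and that $H_*(N(\tilde K_2))$ and $H_*(\partial N(\tilde K_2))$ carry trivial $x$-action. In fact, since $[K_2]=l$ in $H_1(X_{K_1})\cong\bZ$, the image of $\pi_1(K_2)$ in the deck group $\bZ$ is $l\bZ$, so $\tilde K_2$ has $[\bZ:l\bZ]=|l|$ components, each a copy of $\bR$, and the deck transformation $x$ permutes them cyclically (only $x^{l}$ preserves a component, acting on it by translation). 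Consequently $H_0(\tilde K_2)\cong\Lambda_1/(x^l-1)$ as a $\Lambda_1$-module — the regular representation of $\bZ/l$ — not $\Lambda_1/(x-1)$ as your picture would give, and the same cyclic permutation acts nontrivially on $H_0$ and $H_1$ of $N(\tilde K_2)$ and $\partial N(\tilde K_2)$. This is exactly where the factor $(1-x^l)/(1-x)=1+x+\cdots+x^{l-1}$ in Torres's formula comes from: the quotient of the order $x^l-1$ of $H_0(\tilde K_2)$ by the order $x-1$ of $H_0(\tilde X_{K_1})$. With the picture as you stated it, the Mayer--Vietoris computation would lose this factor entirely and you would conclude the false identity $\Delta_L(x,1)\doteq\Delta_{K_1}(x)$. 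The phrase that the lift of the longitude of $K_2$ ``represents $x^l-1\in\Lambda_1$'' gestures at the right algebra, but is not geometrically meaningful as stated (a single lift of $\lambda_{K_2}$ is an arc, not a cycle, and its full preimage again has $|l|$ components); the correct carrier of $x^l-1$ is $H_0(\tilde K_2)$. Finally, you correctly flag but do not resolve the normalization issue relating $\Delta_L$ to the order of the Alexander module of $L$ (the extra $(x-1)$ or $(y-1)$ factor coming from the non-square presentation matrix), which must be tracked through the sequence. As written, the proposal does not constitute a proof, though the scaffold can be repaired by correcting the description of $\tilde K_2$.
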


From now on, let $L=K\cup U$ be a 2-component link such that
\begin{enumerate}
    \item  $U$ is an unknot,
    \item $K$ is either a trefoil or an unknot,
    \item the linking number $l=\lk(K,U)$ is positive.
\end{enumerate} 
Let $\Delta_L(x,y)$ be the multi-variable Alexander polynomial of $L$, where $x$, $y$ are the variables corresponding to $K$ and $U$ respectively. 
Define 
$$
F(x,y)=(1-x)(1-y)\Delta_L(x,y).
$$

By Theorem \ref{Torres-multi-ALexander}, we have
\begin{equation} 
\label{eqn_Delta(1,y)}
    \Delta_L(1,y)\doteq (1+y+\cdots+y^{l-1})\Delta_U(y)=1+y+\cdots+y^{l-1}.
\end{equation}
Write 
\begin{equation}
\label{eqn_defn_of_gm}
(1-y)\Delta_L(x,y)=:\sum_{m=-\infty}^{+\infty} g_m(x)y^m,
\end{equation}
then by definiton, 
\begin{equation}\label{eq_Fxy_gm}
F(x,y)= \sum_m (1-x)g_m(x)y^m.
\end{equation}
By \eqref{eqn_Delta(1,y)}, have
$$
(1-y)\Delta_L(1,y)\doteq (1-y) (1+y+\cdots+y^{l-1})=1-y^l.
$$
Therefore, after multiplying $\Delta_L(x,y)$ by $\pm y^a$, we may assume without loss of generality that
\begin{equation}
\label{eqn_value_of_g_at_1}
    g_0(1)=- g_l(1)=1,~g_m(1)=0~\text{for all}~m\neq 0, l.
\end{equation}

We establish the following two technical lemmas, which allow us to deduce topological properties of $L$ from the sequence of Laurent polynomials $\{g_m(x)\}_{m\in \bZ}$.

\begin{Lemma}\label{lem_supp_2_f-grading}
Let $L$, $\{g_m(x)\}_{m\in\bZ}$ be as above.
If $g_m(x)=0$ for all $m \neq 0, l$, then we have $l=1$.
\end{Lemma}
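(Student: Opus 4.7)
The plan is to use the hypothesis to factor $\Delta_L(x,y)$ into a simple product, and then invoke Torres' theorem to extract an integrality constraint that forces $l=1$.

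First, I would observe that under the assumption $g_m(x)=0$ for $m\neq 0, l$, the definition \eqref{eqn_defn_of_gm} reduces to
\[
(1-y)\Delta_L(x,y) \;=\; g_0(x) + g_l(x)\,y^l.
\]
Evaluating at $y=1$ makes the left-hand side vanish, forcing $g_l(x)=-g_0(x)$, and dividing by $1-y$ yields the factorization
\[
\Delta_L(x,y) \;=\; g_0(x)\bigl(1 + y + \cdots + y^{l-1}\bigr).
\]
The normalization \eqref{eqn_value_of_g_at_1} guarantees $g_0(1)=1$, in particular $g_0(x)$ is a nonzero element of $\bZ[x,x^{-1}]$.

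Next, I would apply Theorem \ref{Torres-multi-ALexander} with respect to the unknotted component $U$ to obtain
\[
\Delta_L(x,1) \;\doteq\; (1+x+\cdots+x^{l-1})\,\Delta_K(x).
\]
On the other hand, the factorization above gives $\Delta_L(x,1) = l\cdot g_0(x)$. Combining these,
\[
l\cdot g_0(x) \;=\; \pm\, x^a\,(1+x+\cdots+x^{l-1})\,\Delta_K(x)
\]
for some integer $a$. Since $g_0(x)\in\bZ[x,x^{-1}]$, the integer $l$ must divide every coefficient of the Laurent polynomial $(1+x+\cdots+x^{l-1})\Delta_K(x)$.

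The conclusion then follows from a short two-case analysis. If $K$ is the unknot, $\Delta_K(x)=1$ and the polynomial in question equals $1+x+\cdots+x^{l-1}$, whose coefficients are all $1$; hence $l\mid 1$, so $l=1$. If $K$ is a trefoil, then up to a unit $\Delta_K(x)=1-x+x^2$, and a direct expansion shows that $(1+x+\cdots+x^{l-1})(1-x+x^2)$ has coefficients lying in $\{-1,0,1\}$ with at least one nonzero entry (for instance the constant term is $\pm 1$); divisibility by $l$ again forces $l=1$. There is no substantive obstacle beyond bookkeeping: the unit ambiguity $\pm x^a y^b$ in Torres' formula only shifts indices and signs coefficients, and so does not interfere with the divisibility argument.
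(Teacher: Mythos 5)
Your proof is correct and follows essentially the same route as the paper: factor $\Delta_L(x,y)=g_0(x)(1+y+\cdots+y^{l-1})$, evaluate at $y=1$, compare with Torres' formula to obtain $l\,g_0(x)\doteq(1+x+\cdots+x^{l-1})\Delta_K(x)$, and extract a divisibility constraint. The only difference is at the very last step: the paper argues that $\Delta_K(x)/l\in\bZ[x,x^{-1}]$ (which implicitly relies on Gauss's lemma, since $1+x+\cdots+x^{l-1}$ is primitive) and then uses $\Delta_K(1)=\pm 1$, whereas you bypass the content/primitivity argument entirely by expanding $(1+x+\cdots+x^{l-1})\Delta_K(x)$ explicitly for $K$ the unknot or a trefoil and noting that some coefficient is $\pm 1$. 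Your version is more concrete and slightly more self-contained; the paper's is a touch more abstract but also a one-liner once the content argument is taken for granted. Either is fine.
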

\begin{proof}
By the assumption and \eqref{eqn_defn_of_gm},  
$$
(1-y)\,\Delta_L(x,y)=g_0(x) +g_l(x)y^l.
$$
Plugging in $y=1$, we have $g_l(x)=-g_0(x)$, therefore
$$
\Delta_{L}(x,y)=\frac{(1-y^l)\,g_0(x)}{1-y}=(1+y+\cdots +y^{l-1})\,g_0(x),
$$
and hence
$$
\Delta_{L}(x,1)\doteq l\,g_0(x).
$$
On the other hand, by Theorem \ref{Torres-multi-ALexander}, 
$$
\Delta_{L}(x,1)\doteq (1+x+\cdots+x^{l-1})\Delta_{K}(x).
$$
Recall that $l$ is assumed to be positive. Comparing the two equations above, we have 
$$\frac{\Delta_K(x)}{l}\in\bZ[x,x^{-1}].$$
Since $\Delta_{K}(1)=\pm 1$, this implies $l=1$.
\end{proof}

Recall that for a Laurent polynomial $f$, we use $\|f\|$ to denote the sum of the absolute values of the coefficients of $f$. 
\begin{Lemma}\label{lem_Alex_4_degrees}
Let $L$, $\{g_m(x)\}_{m\in\bZ}$ be as above. Suppose the following two conditions hold: 
\begin{enumerate}
\item
There exists $k\in\bZ^+$, such that
$g_m(x)=0$ for all $m\neq 0,l,-k,l+k$,
\item 
$\|(1-x)g_0(x)\|=\|(1-x) g_l(x)\|=2$,
\end{enumerate}
then $l=1$ and $K$ is an unknot.
\end{Lemma}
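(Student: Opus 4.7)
I would begin by extracting the monomial structure of $g_0$ and $g_l$. Since $g_0(1) = 1$ and $\|(1-x)g_0(x)\| = 2$, writing $(1-x)g_0(x)$ as $\pm(x^p - x^q)$ with $p \ne q$ and using $g_0(1) = 1$ forces $q = p+1$, hence $g_0(x) = x^a$ for some integer $a$. Similarly $g_l(x) = -x^{a'}$. Multiplying the identity $(1-y)\Delta_L(x,y) = \sum_m g_m(x)y^m$ by $y^k$ produces a genuine polynomial in $y$ that must vanish at $y=1$; this yields the relation $g_{-k}(x) + x^a - x^{a'} + g_{l+k}(x) = 0$.

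Next, I would compute $\Delta_L(x,y) = \sum_m h_m(x) y^m$ directly from $g_m = h_m - h_{m-1}$: one finds $h_m = g_{-k}$ on $[-k,-1]$, $h_m = g_{-k} + x^a$ on $[0,l-1]$, $h_m = g_{-k} + x^a - x^{a'}$ on $[l, l+k-1]$, and $h_m = 0$ outside. Setting $x=1$ gives
\[
\Delta_L(1, y) = g_{-k}(1)\cdot(y^{-k} + \cdots + y^{l+k-1}) + (1 + y + \cdots + y^{l-1}).
\]
Theorem~\ref{Torres-multi-ALexander} applied with the roles of $x$ and $y$ swapped says $\Delta_L(1, y) \doteq 1 + y + \cdots + y^{l-1}$. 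Since $k \ge 1$, any nonzero value of $g_{-k}(1)$ produces either a coefficient of absolute value $\ge 2$ on the middle interval or a disconnected support, neither of which is compatible with $\pm y^c(1 + y + \cdots + y^{l-1})$. Hence $g_{-k}(1) = 0$.

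Finally, setting $y=1$ yields $\Delta_L(x,1) = (2k+l)g_{-k}(x) + (l+k)x^a - k x^{a'}$, and Theorem~\ref{Torres-multi-ALexander} identifies this with $(1+x+\cdots+x^{l-1})\Delta_K(x)$ up to a unit. The key numerical observation, verified by a direct expansion, is that every coefficient of $(1+x+\cdots+x^{l-1})\Delta_K(x)$ lies in $\{-1,0,1\}$, whether $\Delta_K \doteq 1$ (unknot) or $\Delta_K \doteq x^2 - x + 1$ (trefoil). Since $2k+l \ge 3$, the coefficient of $x^p$ in $\Delta_L(x,1)$ equals $(2k+l)\,[g_{-k}]_{x^p}$ at any $p \notin \{a,a'\}$, so $g_{-k}$ must be supported in $\{a,a'\}$. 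The main obstacle, and the last step, is a short case analysis. If $a \ne a'$, the constraint $|(2k+l)c + (l+k)| \le 1$ at $x^a$ forces $k=1$ and $[g_{-k}]_{x^a} = -1$; combined with $g_{-k}(1) = 0$ this gives $[g_{-k}]_{x^{a'}} = 1$, producing a coefficient of $l+1 \ge 2$ at $x^{a'}$, contradiction. Thus $a = a'$, in which case $g_{-k}(1)=0$ forces $g_{-k} = 0$, and $\Delta_L(x,1)$ reduces to $l\cdot x^a$; the coefficient bound forces $l = 1$, and then $\Delta_K(x) \doteq 1$ rules out the trefoil, so $K$ is the unknot.
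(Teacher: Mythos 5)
Your proof is correct and follows essentially the same strategy as the paper: both extract $g_0 = x^a$, $g_l = -x^{a'}$ from Condition~(2), both arrive at the key identity $\Delta_L(x,1) = (2k+l)\,g_{-k}(x) + (l+k)x^a - kx^{a'}$, and both compare it against the explicit list of possibilities for $(1+x+\cdots+x^{l-1})\Delta_K(x)$ supplied by Torres' formula. The only notable differences are organizational: the paper first argues that $\Delta_L(x,1)$ can have at most two terms (using that any coefficient away from $\{a,a'\}$ is a multiple of $2k+l \ge 3$), enumerates the possibilities $(K,l) \in \{(U,1),(U,2),(T,2)\}$, and then eliminates the $l=2$ cases by a congruence argument modulo $2k+2$; you instead case-split directly on $a = a'$ versus $a \ne a'$, using the coefficient bound $|(2k+l)c + (l+k)| \le 1$ to force $k=1$ and then reach a contradiction via the coefficient $l+1 \ge 2$ at $x^{a'}$. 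Your organization is arguably a bit shorter since it avoids the explicit enumeration of $(K,l)$ pairs. One small redundancy: your re-derivation of $g_{-k}(1) = 0$ from Torres' formula in the $y$-variable is unnecessary, since this is already part of the normalization \eqref{eqn_value_of_g_at_1} fixed in the setup of the lemma (and which you use yourself when invoking $g_0(1) = 1$).
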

\begin{proof}
By Condition (1),
\begin{equation}
\label{eqn_Delta(x,y)_expansion_4_terms_in_y}
    (1-y)\Delta_L(x,y) = g_{-k}(x)y^{-k}+g_0(x)+g_{l}(x)y^{l}+g_{l+k}(x)y^{l+k}.
\end{equation}

By Condition (2),
$\|(1-x)\,g_0(x)\|=2$.
Hence there exists an integer $s>0$ such that $(1-x)\,g_0(x)\doteq 1-x^s$, thus 
$$g_0(x) \doteq 1+\cdots +x^{s-1}.$$
By \eqref{eqn_value_of_g_at_1}, this implies $s=1$, therefore $g_0(x)\doteq 1$. Similarly $g_l(x)\doteq 1$. By  \eqref{eqn_value_of_g_at_1}, there exist integers $a,b$, such that
$$
g_0(x)=x^a, g_l(x)=-x^b.
$$
Plugging in $y=1$ to \eqref{eqn_Delta(x,y)_expansion_4_terms_in_y}, we obtain
\begin{equation}\label{eq_sum_g1234'}
g_{l+k}(x)=x^b-x^a-g_{-k}(x),
\end{equation}
and hence
\begin{align*}
(1-y)\Delta_{L}(x,y) &= g_{-k}(x) y^{-k} + x^a - x^b y^l + \big(x^b-x^a-g_{-k}(x)\big)y^{l+k}
\\
& = g_{-k}(x)(y^{-k}-y^{l+k}) + x^a(1-y^{l+k}) - x^b ( y^l-y^{l+k}).
\end{align*}
Therefore
\begin{align}
\Delta_{L}(x,1) & = \lim_{y\to 1}\Bigg(\frac{g_{-k}(x)(y^{-k}-y^{l+k}) + x^a(1-y^{l+k}) - x^b ( y^l-y^{l+k})}{1-y} \Bigg)
\nonumber
\\
& = (l+2k)\,g_{-k}(x)+(l+k)x^a-kx^b.
\label{eq_DeltaKU_x_1}
\end{align}
By \eqref{eq_DeltaKU_x_1}, if $\Delta_L(x,1)$ has more than two terms, then at least one of its coefficients is a multiple of $(l+2k)$.

On the other hand, by Theorem \ref{Torres-multi-ALexander},
$$
\Delta_{L}(x,1)\doteq (1+x+\cdots +x^{l-1}) \Delta_{K}(x),
$$
and hence (recall we have assumed that $K$ is either a trefoil or an unknot)
  \begin{equation}
  \label{eqn_compuation_of_Delta(x,1)}
    \Delta_{L}(x,1)\doteq
    \begin{cases}
      1+x+\cdots+x^{l-1} & \text{if $K$ is an unknot,} \\
      1-x+x^2 & \text{if $K$ is a trefoil and $l=1$,}   \\
       1+\displaystyle{\sum_{k=2}^{l-1} x^k}+x^{l+1} & \text{if $K$ is a trefoil and $l\ge 2$.}
    \end{cases}
  \end{equation}
In particular, all the coefficients of $\Delta_L(x,1)\in\bZ[x,x^{-1}]$ are $\pm 1$. Since $l+2k\ge 3$, the previous argument implies that $\Delta_L(x,1)$ has at most two terms, and hence there are three possibilities:
\begin{enumerate}
    \item $K$ is an unknot, $l=1$,
    \item $K$ is an unknot, $l=2$,
    \item $K$ is a trefoil, $l=2$.
\end{enumerate}
To eliminate the second and third possibilities, notice that in these cases,   \eqref{eq_DeltaKU_x_1} and \eqref{eqn_compuation_of_Delta(x,1)} yield 
$$
(2k+2) g_{-k}(x)+(k+2) x^a - k x^b\doteq x+1 \text{ or }x^3+1.
$$
By the assumptions, we have $k\in \bZ^+$, and hence
$$
 (k+2) x^a - k x^b \doteq x+1 \text{ or } x^3 + 1 \quad \text{mod } (2k+2) ,
$$
therefore we have
$$a\neq b,$$
and 
$$ k+2\equiv \pm 1, k \equiv \mp 1 \quad \text{mod } (2k+2),$$ 
which imply $k=1$, thus $g_{-k}(x) = -x^a$. This yields a contradiction to \eqref{eqn_value_of_g_at_1}.
\end{proof}

\begin{proof}[Proof of Proposition \ref{prop_rank_12_braid}]
Without loss of generality, we assume $l=\lk(K,U)>0$.
By \eqref{eqn_dim_assumption_HFK(L;Q)}, we have
\begin{equation}
\label{eqn_dim_assumption_HFL(L;Q)}
    \dim_\bQ \HFL(L;\bQ)= \dim_\bQ \HFK(L;\bQ)\le 12.
\end{equation}

For $a\in \frac12\bZ$, we use $\HFL(L,a;\bQ)$ to denote the component of $\HFL(L;\bQ)$ with degree $a$ on the Alexander grading associated to $U$. Let $\Delta_L(x,y)$, $F(x,y)$, $g_m(x)$ be as above, and we choose $\Delta_L(x,y)$ such that \eqref{eqn_value_of_g_at_1} holds.

Recall that by \eqref{eq_HFL_Alex}, the coefficients of $F(x,y)$ are the bi-graded Euler characteristics of $\HFL(L;\bQ)$.
Since $F(1,y)=0$, we have $\dim_\bQ\HFL(L,a;\bQ)$ is even for all $a$. By \eqref{eq_HFL_symmetry}, we have
\begin{equation}
\label{eqn_HFL(L,a;Q)_symmetric_at_zero}
    \dim_{\bQ}\HFL(L,a;\bQ)=\dim_{\bQ} \HFL(L,-a;\bQ).
\end{equation}

Since $0\neq \Delta_L(x,y)\doteq \Delta_L(x^{-1},y^{-1})$, there is a unique $(a,b)\in \frac12\bZ\times\frac12\bZ$, such that $\hat F(x,y):= x^ay^b F(x,y)$ satisfies $\hat F (x,y) = \pm \hat F(x^{-1},y^{-1})$. Write
$$
\hat F(x,y) = \sum_{m\in\frac12\bZ} \hat f_m(x) y^m,
$$
then by \eqref{eqn_value_of_g_at_1}, we have $\hat f_{l/2}(x)=\pm \hat f_{-l/2}(x^{-1})\neq 0$. Therefore by  \eqref{eq_HFL_Alex} and \eqref{eq_HFL_symmetry}, 
\begin{equation}
\label{eqn_non-vanishing_at_deg_l/2}
    \dim_{\bQ}\HFL(L,l/2;\bQ)= \dim_{\bQ}\HFL(L,-l/2;\bQ)\neq 0.
\end{equation}

Let $s\in \frac12\bZ$ be the maximum degree such that $\dim_{\bQ}\HFL(L,s;\bQ)\neq 0$, then $s\ge l/2>0$. Since $\dim_{\bQ}\HFL(L,s;\bQ)$ is even, by \eqref{eqn_dim_assumption_HFL(L;Q)} and \eqref{eqn_HFL(L,a;Q)_symmetric_at_zero}, we have 
    $$\dim_{\bQ}\HFL(L,s;\bQ)=2, 4,\text{ or }6.$$
    We discuss four cases.

{\bf Case 1.}  $\dim_{\bQ}\HFL(L,s;\bQ)=2$. 
By Proposition \ref{prop_braid_detection_HFL}, $K$ is a braid closure with axis $U$, therefore Case (1) of the proposition holds. 

{\bf Case 2.} $\dim_{\bQ}\HFL(L,s;\bQ) = 4$, and $s=\frac{l}{2}$.
By Proposition \ref{prop_HFL_Thurston_norm}, $U$ has a Seifert disk that intersects $K$ transversely at $l$ points, therefore this assumption contradicts Proposition \ref{prop_HFL_top_grading_parity}.

{\bf Case 3.} $\dim_{\bQ}\HFL(L,s;\bQ) =4 $, and $s>\frac{l}{2}$.
By \eqref{eqn_dim_assumption_HFL(L;Q)} and \eqref{eqn_non-vanishing_at_deg_l/2}, 
$$\dim_\bQ\HFL(L,\pm s;\bQ)=4,~ \dim_\bQ\HFL(L,\pm \frac{l}{2};\bQ)=2,$$
 and
$\HFL(L,a;\bQ)$ vanishes at all the other degrees. 
By \eqref{eq_HFL_Alex}, $\{g_m(x)\}_{m\in\bZ}$ satisfies the assumption of Lemma \ref{lem_Alex_4_degrees}, therefore $l=1$ and 
$K$ is an unknot, and hence Case (2) holds.

{\bf Case 4.}  $\dim_{\bQ}\HFL(L,s;\bQ)=6$. By
\eqref{eqn_dim_assumption_HFL(L;Q)} and \eqref{eqn_non-vanishing_at_deg_l/2}, we have 
$s=\frac{l}{2}$. By Proposition \ref{prop_HFL_Thurston_norm}, there is a Seifert disk of $U$ that intersects $K$ transversely at $l$ points.
By \eqref{eq_HFL_Alex}, $F(x,y)$ is supported at only two degrees in $y$, and hence $\{g_m(x)\}_{m\in\bZ}$ satisfies the assumption of Lemma \ref{lem_supp_2_f-grading}. By Lemma \ref{lem_supp_2_f-grading}, we have  $l=1$, therefore $U$ is a meridian of $K$. If $K$ is an unknot, then $L$ is the Hopf link, which satisfies Case (1). Otherwise, $K$ is a trefoil, and hence Case (3) holds.

\end{proof}

\section{Proof of the main theorem}
\label{sec_detection}

In this section, we prove that Khovanov homology detects the links $L_1$ and $L_2$ given by Figure \ref{fig_L1_and_L2}.

Recall that the internal grading of the Khovanov homology of a link $L$ is introduced in \cite[Section 2]{Kh-unlink} as $h-q$, where 
$h$ is the homological grading, and $q$ is the quantum grading. Following \cite{Kh-unlink}, we use $l$ to denote the internal grading.
 The next theorem is a special case of a more general result due to Batson and Seed.
\begin{Theorem}[{\cite[Corollary 4.4]{Kh-unlink}}]\label{Theorem_Kh_linking_number}
Suppose $L=K_1\cup K_2$ is a 2-component oriented link. Then we have
$$
\rank_{\mathbb{F}}^{l}\Kh(L;\mathbb{F})\ge \rank_{\mathbb{F}}^{l+2\lk(K_1,K_2)} (\Kh(K_1;\mathbb{F})\otimes \Kh(K_2;\mathbb{F}))
$$
for all $l\in\bZ$, where $\mathbb{F}$ is an arbitrary field and $\rank^k$ denotes the rank of the summand with internal grading $k$.
\end{Theorem}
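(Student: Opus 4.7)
The plan is to apply the link-splitting spectral sequence constructed by Batson and Seed. Given a 2-component link $L = K_1 \cup K_2$ with a diagram $D$ decomposed into sub-diagrams $D_1, D_2$ for the two components, Batson and Seed perturb the Khovanov differential on $CKh(D;\mathbb{F})$ by adding an extra term at each crossing that involves both $D_1$ and $D_2$. The associated filtered complex yields a spectral sequence starting at $\Kh(L;\mathbb{F})$ and converging to $\Kh(K_1 \sqcup K_2;\mathbb{F})$. Since $K_1 \sqcup K_2$ is a split link, the K\"unneth formula gives $\Kh(K_1 \sqcup K_2;\mathbb{F}) \cong \Kh(K_1;\mathbb{F}) \otimes \Kh(K_2;\mathbb{F})$.

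The first step is to verify that the spectral sequence differentials preserve the internal grading $l = h - q$. The extra term added by Batson and Seed at each mixed crossing shifts the homological grading $h$ and the quantum grading $q$ by equal amounts, so the total deformed complex decomposes as a direct sum indexed by $l$, and every page of the spectral sequence inherits the $l$-grading. Consequently, for each fixed $l$, the rank in internal grading $l$ is non-increasing as one passes from the initial page (equal to $\Kh(L;\mathbb{F})$) to $E_\infty$.

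The second step is to match up the internal gradings on the two sides of the spectral sequence. The Khovanov quantum grading is normalized by a shift determined by the writhe of the diagram, and the writhe of $D$ exceeds the writhe of the split diagram $D_1 \sqcup D_2$ by the signed count of mixed crossings, which is exactly $2\lk(K_1, K_2)$. Consequently, an element of internal grading $l$ on $\Kh(L;\mathbb{F})$ is identified under the Batson--Seed spectral sequence with an element of internal grading $l + 2\lk(K_1, K_2)$ on $\Kh(K_1;\mathbb{F}) \otimes \Kh(K_2;\mathbb{F})$. Combining this identification with the rank bound from the previous paragraph yields the claimed inequality.

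The main obstacle is bookkeeping: confirming the precise bidegree of the Batson--Seed perturbation in order to see that the $l$-grading is preserved by every page of the spectral sequence, and tracking the sign so that the writhe contribution comes out exactly to $+2\lk(K_1, K_2)$. Each step is routine but requires carefully unwinding the normalization conventions for the Khovanov bigrading.
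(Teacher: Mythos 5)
The paper itself does not prove this statement; it is quoted verbatim as \cite[Corollary 4.4]{Kh-unlink}, so your task amounts to reconstructing Batson and Seed's argument, which is indeed what you set out to do. The overall skeleton — deform the Khovanov differential at mixed crossings, obtain a spectral sequence from $\Kh(L;\mathbb{F})$ to $\Kh(K_1\sqcup K_2;\mathbb{F})\cong \Kh(K_1;\mathbb{F})\otimes\Kh(K_2;\mathbb{F})$, observe that the $l$-grading survives to all pages, and account for a $2\lk(K_1,K_2)$ shift coming from the writhe — is the right one and matches the cited source.

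However, the step you flag as ``bookkeeping'' contains a genuine error. You assert that the Batson--Seed perturbation shifts $h$ and $q$ by \emph{equal} amounts and conclude that the deformed complex splits as a direct sum indexed by $l=h-q$. Neither claim is correct. With the standard conventions, the Khovanov differential $d$ has $(h,q)$-bidegree $(1,0)$, so it already has $l$-degree $+1$ and the Khovanov complex does \emph{not} split as a direct sum of $l$-homogeneous subcomplexes. The Batson--Seed term $d'$ runs backwards in the cube and has $(h,q)$-bidegree $(-1,-2)$, so its $l$-degree is $-1-(-2)=+1$ as well. The actual point is that $d$ and $d'$ have the \emph{same} $l$-degree (namely $+1$), so $d+d'$ is $l$-homogeneous; the complex is $l$-graded in the sense that the total differential raises $l$ by one, and therefore every page of the spectral sequence, and the abutment, inherits an $l$-grading. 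Had $d'$ really shifted $h$ and $q$ by equal amounts, its $l$-degree would be $0$, which does not match the $l$-degree of $d$, and then $d+d'$ would not be $l$-homogeneous at all — your own premise would destroy the conclusion you draw from it. So you should replace ``shifts $h$ and $q$ by equal amounts, so the complex decomposes as a direct sum'' with ``has the same $l$-degree as the Khovanov differential, so the total differential is $l$-homogeneous and the $l$-grading descends to every page.'' The writhe computation in your final paragraph is correct: the $l$-shift in Khovanov's normalization depends only on $w=n_+-n_-$, and $w(D)-w(D_1\sqcup D_2)$ is the signed count of mixed crossings, i.e.\ $2\lk(K_1,K_2)$, which produces the stated shift.
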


Let $\bF=\mathbb{Z}/2$ from now on.
We have
\begin{equation}\label{eq_Kh_U2}
\Kh(U_2;\bF)=\bF_{(-2)}\oplus \bF_{(0)}^2\oplus \bF_{(2)},
\end{equation}
where $U_2$ is the 2-component unlink and the subscripts denote the internal gradings.
Let $T$ be the left-handed trefoil, and let $U$ be the unknot, we have
\begin{equation}\label{eq_Kh_TU}
\Kh(T;\bF)\otimes \Kh( U;\bF)=
\bF_{(0)}\oplus \bF_{(2)}^3 \oplus \bF_{(3)} \oplus\bF_{(4)}^3\oplus \bF_{(5)}^2 \oplus \bF_{(6)}\oplus \bF_{(7)}
\end{equation}
Let $\bar{T}$ be the right-handed trefoil, we have
\begin{equation}\label{eq_Kh_right_TU}
\Kh(\bar{T};\bF)\otimes \Kh( U;\bF)=
\bF_{(0)}\oplus \bF_{(-2)}^3 \oplus \bF_{(-3)} \oplus\bF_{(-4)}^3\oplus \bF_{(-5)}^2 \oplus \bF_{(-6)}\oplus \bF_{(-7)}
\end{equation}

Recall that the link $L_1=\text{L7n1}$ can be described as $\hat{\sigma}_1^3\cup U$ 
 where $\hat{\sigma}_1^3$ is the closure of the 2-braid $\sigma_1^3$ with axis unknot $U$, and we choose the orientation as given by Figure \ref{fig_L1_and_L2} and hence the linking number is $2$. The link $L_2$ is given by $T\cup U$ where $U$ is a meridian of $T$ and the orientation is chosen so 
that the linking number is $1$. 

We have
\begin{equation}\label{eq_Kh_L7n1}
\Kh(L_1;\bF) =
\bF_{(4)}\oplus \bF_{(6)}^3 \oplus \bF_{(7)} \oplus\bF_{(8)}^3\oplus \bF_{(9)}^2 \oplus \bF_{(10)}\oplus \bF_{(11)},
\end{equation} 
\begin{equation}\label{eq_Kh_T+m}
\Kh(L_2;\bF)= 
\bF_{(2)}\oplus \bF_{(4)}^3 \oplus \bF_{(5)} \oplus\bF_{(6)}^3\oplus \bF_{(7)}^2 \oplus \bF_{(8)}\oplus \bF_{(9)}.
\end{equation}

Besides the above links, we define the link $L_3=\widehat{\sigma_1\sigma_2}\cup U$,
which is the union of the closure of the 3-braid $\sigma_1\sigma_2$ and  
its axis unknot $U$. This is the torus link $T(2,6)$, which is denoted by 
L6a3 in the Thistlethwaite Link Table.
We pick the orientation properly so that the linking number is positive, then
\begin{equation}\label{eq_Kh_L6a3}
\Kh(L_3;\bF)=\bF_{(4)}\oplus \bF_{(6)}^2 \oplus \bF_{(7)} \oplus \bF_{(8)}^2 \oplus \bF_{(9)}^2 \oplus 
\bF_{(10)}^2 \oplus \bF_{(11)}\oplus \bF_{(12)}.
\end{equation}

We now prove Theorem \ref{thm_main_detection}.
\begin{reptheorem}{thm_main_detection}
Suppose $L=K_1\cup K_2$ is a 2-component oriented link and $i\in\{1,2\}$. If $\Kh(L;\bF)\cong \Kh(L_i;\bF)$ ($i=1,2$) as 
$l$-graded abelian groups, then  $L$ is isotopic to $L_i$ as oriented links.
\end{reptheorem}
\begin{proof}
Recall that $\bF=\bZ/2$. By the assumptions, we have 
\begin{equation}\label{eqn_assumption_Kh_rank_12}
    \rank_\bF\Kh(L;\bF)=12.
\end{equation}
By \cite[Corollary 1.7]{Dowlin}, we have
\begin{equation}\label{eq_HFK<Kh}
\rank_\bQ \HFK(L;\bQ)\le 2\rank_\bQ \Khr(L;\bQ)\le 2\rank_{\bZ/2} \Khr(L;{\mathbb{Z}/2})=12.
\end{equation}

Theorem \ref{Theorem_Kh_linking_number} yields
$$
\rank_\bF \Khr(K_i;\bF)=\frac12 \rank_\bF \Kh(K_i;\bF)\le \frac12 \frac{12}{2}=3.
$$
Therefore $K_i$ ($i=1,2$) is either the unknot or a trefoil according to \cite{KM:Kh-unknot,BS}.
By
Theorem \ref{Theorem_Kh_linking_number} again, we have at least one of $K_1$ and $K_2$ is the unknot. Without loss of generality we assume $K_2$ is an unknot, and we discuss two cases. 
\\

{\bf Case 1.} $K_1$ is also an unknot. We show that this is contradictory to the assumptions. In fact, 
Theorem \ref{Theorem_Kh_linking_number} and \eqref{eq_Kh_U2},
\eqref{eq_Kh_L7n1}, \eqref{eq_Kh_T+m}
imply that 
$l=\lk(K_1,K_2)$ 
is no less than $2$.  Therefore $K_1$ is the closure of an $l$-braid with axis $K_2$
by Proposition \ref{prop_rank_12_braid}. 
Switching the role of $K_1$ and $K_2$ we obtain that $K_2$ is the closure of an
$l$-braid with axis $K_1$. 
  By Proposition \ref{prop_linking_number_3}, we have $l\le 3$.
 If $l=2$, then $L=\hat{\sigma}_1^{\pm 1}\cup U$, which is the link L4a1 in the Thistlethwaite Link Table.
  This contradicts \eqref{eqn_assumption_Kh_rank_12} because 
 $\rank_\bF\Kh(\text{L4a1};\bF)=8$. If $l=3$, since the only 3-braid representations of the unknot 
 are given by $\sigma_1^{\pm 1}\sigma_2^{\pm 1}$ and $\sigma_1^{\pm 1}\sigma_2^{\mp 1}$, we further divide into two cases:

{\bf Case 1.1.}
 $L=\widehat{\sigma_1\sigma_2}\cup U=L_3$ or 
$L=\widehat{\sigma_1^{-1}\sigma_2^{-1}}\cup U=\bar{L}_3$. Recall that 
 $\Kh(L_3;\bF)$ 
 is given by \eqref{eq_Kh_L6a3}.
 Changing the orientation or taking the mirror image will shift the $l$-grading or change the sign of the $l$-grading, respectively. In any case, the $l$-graded Khovanov homology 
 of $L$
 cannot be isomorphic to $\Kh(L_i;\bF)$ ($i\in \{1,2\}$), contradicting the assumptions. 

{\bf Case 1.2.}
 $L=\widehat{\sigma_1\sigma_2^{-1}}\cup U$ or 
$L=\widehat{\sigma_1^{-1}\sigma_2}\cup U$. In this case $L$ is the link L6a2 (or its mirror image) 
in the Thistlethwaite Link Table. We have $\rank_\bF\Kh(L;\bF)=20$, which is not the same as $L_1$
and $L_2$. 

In conclusion, $K_1$ cannot be an unknot.
\\

{\bf Case 2.}
$K_1$ is a trefoil. There are two cases.

{\bf Case 2.1.}
 $K_1$ is the right-handed trefoil $\bar{T}$. Then Theorem \ref{Theorem_Kh_linking_number} and \eqref{eq_Kh_right_TU},
\eqref{eq_Kh_L7n1}, \eqref{eq_Kh_T+m} yield a contradiction. 

{\bf Case 2.2.}
$K_1$ is the left-handed trefoil $T$.


If $\Kh(L;\bF)\cong \Kh(L_1;\bF)$, then by Theorem \ref{Theorem_Kh_linking_number}, we have $\lk(K_1,K_2)=2$.
By 
Proposition \ref{prop_rank_12_braid}, the knot $K_1$ is the closure of a 2-braid in $S^3-N(K_2)$. 
A 2-braid representing the left-handed trefoil can only be $\sigma_1^3$.  Therefore
$L$ is isotopic $L_1$.

If $\Kh(L;\bF)\cong \Kh(L_2;\bF)$, then by Theorem \ref{Theorem_Kh_linking_number}, we have $\lk(K_1,K_2)=1$.
By Proposition \ref{prop_rank_12_braid}, the knot $K_2$ is a meridian of the left-handed trefoil 
$K_1$. Therefore $L$ is isotopic to $L_2$.
\end{proof}

\begin{remark}
The argument above gives an alternative proof of Martin's theorem that
Khovanov homology detects the torus link $T(2,6)$ \cite[Theorem 4]{Martin:T26}. In fact, the link $T(2,6)$  is detected by Case 1.1 in the argument above. 
\end{remark}

\bibliographystyle{amsalpha}
\bibliography{references}

\end{document}